\documentclass[11pt]{amsart}

\usepackage[utf8]{inputenc}
\usepackage[T1]{fontenc}
\usepackage[english]{babel}
\usepackage{amsmath,amsthm,amssymb,mathtools}
\usepackage{graphicx}
\usepackage{booktabs,array}
\usepackage{geometry}
\usepackage[justification=raggedright,singlelinecheck=false]{caption}
\usepackage[
  bookmarksdepth=3,
  pdfdisplaydoctitle=true,
  pdfusetitle,
  colorlinks=true,
  urlcolor=blue,
  linkcolor=blue,
  citecolor=blue
]{hyperref}

\newtheorem{theorem}{Theorem}[section]
\newtheorem{lemma}[theorem]{Lemma}

\theoremstyle{definition}

\newtheorem{remark}[theorem]{Remark}

\numberwithin{equation}{section}

\newcommand{\lapl}{\Delta}
\newcommand{\grad}{\nabla}

\renewcommand{\S}{\mathbb{S}}
\renewcommand{\H}{\mathbb{H}}

\newcommand{\R}{\mathbb{R}}

\newcommand{\D}{\mathbb{D}}

\newcommand{\si}{\operatorname{si}}
\newcommand{\ta}{\operatorname{ta}}

\title{Scaling Inequalities and Limits for Clamped Plate Eigenvalues on Geodesic Disks}
\author{Scott Harman}
\date{}

\begin{document}

\begin{abstract}
For the bilaplacian in spherical and hyperbolic spaces, clamped vibrating and buckling eigenvalues on geodesic disks are shown to satisfy scaling inequalities analogous to the standard scale invariance of Euclidean plate eigenvalues. These results extend curved-space scaling results from the Laplacian to fourth-order eigenvalue problems. In addition, the limiting behavior of the scaled eigenvalues is determined as the disks expand to fill the ambient space. Depending on the geometry and the choice of scaling, the eigenvalues tend to zero, positive or negative infinity, or finite values associated with the bottom of the hyperbolic spectrum. The negative divergence in one spherical buckling limit is related by conformal inversion to an exterior buckling problem.
\end{abstract}

\maketitle

\section{Introduction and results}
\subsection*{Curving the free plate}
We consider the fourth-order bilaplacian operator. Throughout we use the analyst's sign convention, so that $-\lapl$ is a nonnegative operator. Let $M$ be a Riemannian manifold and $\Omega \subset M$ be a bounded domain with a smooth boundary. In this paper, we are interested in the following fourth-order eigenvalue equations
\begin{align}
&\lapl^2 u - \tau \lapl u = \Gamma u \hspace{1.1cm} \text{in } \Omega,\label{eq:vibreigenprob} \\
&\lapl^2 u + \rho u = \Lambda (-\lapl) u \quad\text{in } \Omega,\label{eq:buckeigenprob}
\end{align}
where $\rho$ and $\tau$ are real-valued parameters, $\Gamma$ and $\Lambda$ are the eigenvalues, and $\lapl$ is the Laplace--Beltrami operator on $M$ (with the analyst's convention that $-\lapl$ is a positive operator). Both equations arise naturally in the linearized theory of elastic plates. We will impose appropriate boundary conditions (specified below). The first equation arises when considering the frequencies of a vibrating plate with a tensile parameter $\tau$. The plate is subject to tension when $\tau > 0$ and compression when $\tau < 0$. The second equation arises when considering the buckling load of a plate with elasticity parameter $\rho$. The plate is subject to a restorative force when $\rho > 0$ and a destabilizing force when $\rho < 0$. More information can be found in \cite{K93}.

Write $\Gamma(\Omega, \tau)$ for the \textit{vibrating eigenvalue} in Equation \eqref{eq:vibreigenprob} and $\Lambda(\Omega, \rho)$ for the \textit{buckling eigenvalue} in Equation \eqref{eq:buckeigenprob}. When our manifold $M$ is Euclidean space, the eigenvalues satisfy the following scaling identities for all $t>0$:
\begin{align}
t^4\Gamma(t\Omega, \tau/t^2) &= \Gamma(\Omega, \tau), \label{eq:vibratlaw} \\
t^2\Lambda(t\Omega, \rho/t^4) &= \Lambda(\Omega, \rho). \label{eq:bucklaw}
\end{align}

Such identities fail in curved spaces in general, even when suitably adapting linear dilation to manifolds (e.g.\ geodesic discs). In this paper, we uncover analogous scaling information for these plate eigenvalues on manifolds.

\subsection*{Looking at caps and disks}

We use the coordinate framework from \cite{H25}. Our attention will be on spherical caps in spherical space and geodesic disks in hyperbolic space. For the sphere $\S^2$, we let $(\theta, \xi)$ be our coordinate system where $\theta \in [0,
\pi]$ is the angle of aperture from the north pole and $\xi \in [0, 2\pi)$ represents the longitudinal angle around the equator. For hyperbolic space $\H^2$, we employ the analogous coordinate system where $\theta > 0$ represents geodesic distance from the origin and $\xi \in [0, 2\pi)$. For $\Theta>0$, we define the spherical cap of aperture $\Theta$ by
\[
C(\Theta):=\Bigl\{\,p\in \mathbb{S}^2:\ \text{in coordinates }p=(\theta,\xi),
\ 0\le \theta<\Theta,\ \xi\in[0,2\pi)\Bigr\}.
\]
Similarly, for $\Theta<0$, we define the hyperbolic disk of geodesic radius $|\Theta|$ by
\[
C(\Theta):=\Bigl\{\,p\in \mathbb{H}^2:\ \text{in coordinates }p=(\theta,\xi),
\ 0\le \theta<|\Theta|,\ \xi\in[0,2\pi)\Bigr\}.
\]

When needed, we will denote their associated Laplace-Beltrami operators as $\lapl_{\S^2}$ or $\lapl_{\H^2}$, but when the operator is clear from context we will simply write $\lapl$. We denote the vibrating eigenvalue on $C(\Theta)$ as $\Gamma(\Theta, \tau)$ and the buckling eigenvalue as $\Lambda(\Theta, \rho)$.

Speaking informally, when $\Theta$ is close to $0$ the cap $C(\Theta)$ is nearly Euclidean at the relevant scale, so any reasonable scaling for plate eigenvalues should recover the Euclidean identities \eqref{eq:vibratlaw}--\eqref{eq:bucklaw} in the limit $\Theta\to 0$. Motivated by \eqref{eq:vibratlaw}, we plan to multiply the vibrating eigenvalues by a scaling factor that behaves like $\Theta^{4}$ near $\Theta=0$ and to normalize $\tau$ by a factor that behaves like $\Theta^{2}$ near $\Theta=0$. For the buckling eigenvalues, in accordance with \eqref{eq:bucklaw}, we instead want our scaling factor to behave like $\Theta^{2}$ and to normalize $\rho$ by a factor that behaves like $\Theta^{4}$ near $\Theta=0$.

Accordingly, we introduce two auxiliary functions $\mu(\Theta)$ and $\nu(\Theta)$, each behaving like $\Theta$ as $\Theta\to 0$, whose powers will produce the necessary scaling and normalization factors. Concretely, $\mu(\Theta)^4$ will serve as the scaling factor for the vibrating problem and $\mu(\Theta)^2$ as the scaling factor for the buckling problem, while $\nu(\Theta)^2$ and $\nu(\Theta)^4$ will serve as the corresponding normalizing factors for $\tau$ and $\rho$, respectively.

Now we describe our choices for $\mu$ and $\nu$. We define the functions

\[
\si(\Theta) = 
\begin{cases} 
\sinh(|\Theta|) & \text{for } \Theta < 0, \\
\sin(\Theta) & \text{for } \Theta \geq 0,
\end{cases} \qquad \ta(\Theta) = 
\begin{cases} 
\tanh(|\Theta|) & \text{for } \Theta < 0, \\
\tan(\Theta) & \text{for } \Theta \geq 0.
\end{cases}
\]
Two natural geometric quantities on our curved spaces are $2\ta(\Theta/2)$ and $\si(\Theta)$. These functions behave like $\Theta$ around $\Theta = 0$, and their second and fourth powers will serve as our scaling and normalizing factors. Indeed, second and fourth powers of either $\si(\Theta)$ or $2\ta(\Theta/2)$ act as scaling or normalizing factors in the theorems below, in various combinations.

\subsection*{Main results}

Subject to a choice of boundary conditions, Equations \eqref{eq:vibreigenprob} and \eqref{eq:buckeigenprob} generate discrete spectra of eigenvalues. There are several choices one may consider (see \cite{C11}), but our main focus will be on the clamped boundary
\[
u = |\grad u| = 0 \quad \text{on } \partial \Omega.
\]
On our geodesic disks $C(\Theta)$, in particular, we generate the two discrete spectra
\begin{align*}
\Gamma_1(\Theta, \tau) &\leq \Gamma_2(\Theta, \tau) \leq \Gamma_3(\Theta, \tau) \leq \dots , \\
\Lambda_1(\Theta, \rho) &\leq \Lambda_2(\Theta, \rho) \leq \Lambda_3(\Theta, \rho) \leq \dots.
\end{align*}
The first displayed spectrum represents the \textit{clamped vibrating eigenvalues} and the second represents the \textit{clamped buckling eigenvalues}; this will be our notation henceforth. When $\tau =0$ or $\rho = 0$ we recover the standard bilaplacian eigenvalues.

Define the functions
\[
S(\Theta) = \si(\Theta), \quad T(\Theta) = 2\ta(\Theta/2).
\]
One may interpret $S$ as the perimeter of the geodesic disk and $T$ as the stereographic radius. Let $\D\subset\R^2$ denote the Euclidean unit disk; whenever we refer to a limiting value at $\Theta=0$, it is to be interpreted as the corresponding Euclidean eigenvalue on the unit disk $\D$.

In each result, $\Theta$ runs over $(-\infty, \pi)$. We first state monotonicity results for the vibrating problem. 

\textbf{Note.}
Since $C(0)=\emptyset$, any occurrence below of a normalized eigenvalue at $\Theta=0$
is understood as a limit. For example,
\[
\lim_{\Theta\to 0}\Gamma_j(\Theta,\tau/S(\Theta)^2)\,S(\Theta)^4=\Gamma_j(\D,\tau),
\]
where $\Gamma_j(\D,\tau)$ denotes the $j$-th clamped plate eigenvalue of \eqref{eq:vibreigenprob} on the Euclidean
unit disk $\D\subset\R^2$ with parameter $\tau$. The same convention applies to the
other normalized quantities below, with $\Lambda_j$ in place of $\Gamma_j$ when appropriate. See Section \ref{sec:euc_limits_buck_vibr} for a proof.

\begin{theorem}[Vibrating monotonicity for nonnegative $\tau$]\label{th:vibratingmono}
Fix $j\ge 1$. If $\tau\ge 0$, then:
\begin{itemize}
\item $\Gamma_j(\Theta,\tau/S(\Theta)^2)\,S(\Theta)^4$ strictly decreases from $\infty$ to $0$,
\item $\Gamma_j(\Theta,\tau/T(\Theta)^2)\,T(\Theta)^4$ strictly increases from $1+\tau$ to $\infty$.
\end{itemize}
\end{theorem}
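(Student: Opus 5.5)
The plan is to pull every problem back to the Euclidean unit disk $\D$ by a conformal map and compare Rayleigh quotients with weights that depend monotonically on $\Theta$. Write $\kappa=1$ on $\S^2$ and $\kappa=-1$ on $\H^2$. In stereographic coordinates $x$, with $|x|=2\ta(\theta/2)$, the metric is $(1+\kappa|x|^2/4)^{-2}|dx|^2$, and $C(\Theta)$ becomes the Euclidean disk of radius $T=T(\Theta)$. Put $x=Ty$ with $y\in\D$, and set $a=\kappa T^2/4$ and $r=|y|$. Then $a$ increases strictly from $-1$ to $\infty$ as $\Theta$ runs over $(-\infty,\pi)$, and the metric is $T^2w^2|dy|^2$ with $w=(1+ar^2)^{-1}$. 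In two dimensions we have $\lapl_g=T^{-2}w^{-2}\lapl_y$, $dA=T^2w^2\,dy$, and the Dirichlet integral is conformally invariant. By min--max over $H^2_0(\D)$ this gives
\[
T^4\,\Gamma_j(\Theta,\tau/T^2)=\min_{\dim V=j}\max_{u\in V}\frac{\int_\D(1+ar^2)^2(\lapl u)^2+\tau\int_\D|\grad u|^2}{\int_\D(1+ar^2)^{-2}u^2}.
\]
We have $1+ar^2\ge 1-r^2>0$ on $\D$. For fixed $u$ and $\tau\ge0$, the numerator weight increases strictly in $a$ for $r>0$, and the denominator weight decreases. Hence each quotient increases strictly in $\Theta$, and so does $\Gamma_j$. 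Strictness comes from comparing with the min--max space at the larger $\Theta$ and using that the attained maximizer cannot vanish on an open set.

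For $S$, I use $S/T=\cos^2(\Theta/2)$ (or $\cosh^2$), which equals $(1+a)^{-1}$. Since $\tau/S^2=\tau(1+a)^2/T^2$, multiplying the quotient by $(1+a)^{-4}$ gives
\[
S^4\,\Gamma_j(\Theta,\tau/S^2)=\min_{V}\max_{u\in V}\frac{\int_\D f^2(\lapl u)^2+\tau(1+a)^{-2}\int_\D|\grad u|^2}{\int_\D f^{-2}u^2},\qquad f=\frac{1+ar^2}{1+a}.
\]
We have $\partial_a f=(r^2-1)/(1+a)^2<0$ on $\D$, so $f$ decreases strictly in $a$. Hence the numerator decreases and $\int f^{-2}u^2$ increases, and the quotient decreases strictly.

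Now the limits. As $\Theta\to-\infty$ we have $a\to-1^+$, so $f\to\infty$ locally uniformly on $\D$ while $f\ge1$. A Poincaré-type bound $\int(\lapl u)^2\ge c\int u^2$, applied on compact subdisks together with a boundary-layer estimate, should force the $S$-quotient to $\infty$. As $\Theta\to\pi$ we have $a\to\infty$ and $f\to r^2$, with $f\le r^2+1/(1+a)$. I take $j$ test functions $u_\varepsilon(y)=\phi_i(y/\varepsilon)$, where the $\phi_i$ are supported in disjoint sectors of an annulus. Their quotient scales like $\varepsilon^4$ plus terms that vanish as $a\to\infty$, so the limit is $0$. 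On the $T$ side, for $\Theta\to\pi$ I use the two-dimensional Sobolev embedding $\sup|u|^2\le C\int(\lapl u)^2$ on $H^2_0(\D)$. This gives $\int w^2u^2\le C\|\lapl u\|^2\int_\D(1+ar^2)^{-2}dy$, and the last integral is $O(1/a)\to0$, so the quotient tends to $\infty$.

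The main obstacle is the limit $1+\tau$ as $\Theta\to-\infty$ on the $T$ side. There $a\to-1$ and the quotient becomes the Rayleigh quotient of $16(\lapl_{\H}^2-\tfrac{\tau}{4}\lapl_{\H})$ on all of $\H^2$ with the full hyperbolic metric $4(1-r^2)^{-2}|dy|^2$. Its spectral bottom is $16(\tfrac1{16}+\tfrac{\tau}{4}\cdot\tfrac14)=1+\tau$, coming from $\lambda_0(\H^2)=1/4$.

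For the lower bound, I expand the eigenfunction in Dirichlet Laplace eigenfunctions of $C(\Theta)$. This gives $\Gamma_j\ge\lambda_1^2+\tau'\lambda_1$ with $\lambda_1=\lambda_1^{D}(C(\Theta))>1/4$, which yields $T^4\Gamma_j>1+\tau$ after rescaling. Since $T\to2$, this bound passes to the limit.

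For the upper bound, for each $j$ I need $j$ functions in $H^2_0$ with disjoint supports in a large geodesic disk whose quotients approach the bottom. I would take radial cutoffs of the ground-state profile $e^{-\theta/2}$ over long annuli or far-apart disks, multiplied by a slowly varying cutoff. The error terms from $\lapl^2$ of the cutoff must be controlled, which is the delicate computation. Together with monotonicity, this identifies the infimum, and hence the limit, as $1+\tau$.
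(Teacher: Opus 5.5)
Your reduction and monotonicity argument are the paper's. Your $a=\kappa T^2/4$ is its $\pm R^2$, your weights $(1+ar^2)^{\pm2}$ and $f^{\pm2}$ are its $C_1,C_2$ and $C_3,C_4$, and strictness is the same min--max comparison. That comparison only needs the maximizer to be nonzero; a nonzero element of a finite-dimensional subspace of $H^2_0(\D)$ can vanish on an open set, so drop that clause. Running both geometries through one parameter $a\in(-1,\infty)$ is a nice way to get monotonicity across $\Theta=0$.

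There is an algebra slip in the $S$-normalized quotient. Since $\int_\D(1+ar^2)^{-2}u^2=(1+a)^{-2}\int_\D f^{-2}u^2$, you divided the Laplacian term and the denominator by $(1+a)^{-2}$ but not the gradient term. Its coefficient is therefore $\tau$, not $\tau(1+a)^{-2}$, as in the paper's Case 1. Monotonicity is unaffected. In the $\Theta\to\pi$ limit, however, the gradient term of $u_\varepsilon$ does not vanish as $a\to\infty$: it tends to $\tau\varepsilon^2\|\grad\phi\|^2/\int|z|^{-4}\phi^2$. You must also send $\varepsilon\to0$, which works.

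Your other $0$ and $\infty$ limits are sound and more direct than the paper's compactness lemmas. On the spherical $T$ side, $\int_\D(1+ar^2)^{-2}dy=\pi/(1+a)$ together with the sup-norm bound gives an explicit rate. On the hyperbolic $S$ side, the same sup-norm bound makes your boundary-layer sketch rigorous: $\int f^{-2}u^2\le\bigl[c^{-1}\bigl(\tfrac{1+a}{1+a\rho^2}\bigr)^2+C\pi(1-\rho^2)\bigr]\int(\lapl u)^2$ uniformly in $u$.

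The genuine gap is the one you flag: the limit $1+\tau$ as $\Theta\to-\infty$ on the $T$ side, which is the only finite nonzero limit in the theorem. You never prove $\limsup T^4\Gamma_j\le 1+\tau$. The Weyl-sequence construction is described, but its error estimate is not carried out, so ``increases from $1+\tau$'' is not established.

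Your lower bound also overreaches. The bound $\lambda_1^D(C(\Theta))>\frac14$ gives only $T^4\Gamma_j\ge \frac{T^4}{16}+\frac{\tau T^2}{4}$, which is below $1+\tau$ because $T<2$. You get $\liminf\ge 1+\tau$; the strict inequality at finite $\Theta$ comes only afterwards, from strict monotonicity.

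The paper closes the upper bound with the exact generalized eigenfunction $\psi=P_{-1/2}(\cosh\theta)$, which satisfies $-\lapl\psi=\frac14\psi$ and $\psi\sim\theta e^{-\theta/2}$, cut off by $\eta$ over a unit-width annulus $A$. The identities $\int|\grad f|^2-\frac14\int f^2=\int\psi^2|\grad\eta|^2$ and $\int(\lapl f)^2-\frac1{16}\int f^2=\int(-\lapl f-\frac14 f)^2+\frac12\bigl(\int|\grad f|^2-\frac14\int f^2\bigr)$ confine every error to $A$. There $\int_A\psi^2\lesssim|\Theta|^2$, while $\int f^2\gtrsim|\Theta|^3$.

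Your route also closes with a short computation. For radial $f=h/\sqrt{\sinh\theta}$ one has $\int f^2\,dV=2\pi\int h^2\,d\theta$ and $-\lapl f=(\sinh\theta)^{-1/2}\bigl(-h''+(\frac14-\frac{1}{4\sinh^2\theta})h\bigr)$. Take $h_k(\theta)=\chi(\theta/L-k)$ for $k=1,\dots,j$, with $\chi\in C_c^\infty(0,1)$. These are disjointly supported in $C(\Theta)$ once $(j+1)L<|\Theta|$. Their gradient and Laplacian quotients are at most $\frac14+O(L^{-2})$ and $\frac1{16}+O(L^{-2})$, so, since $T<2$, the quotient on their span is at most $1+\tau+O(L^{-2})$. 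Only $(\lapl f)^2$ enters the quotient, so you need two derivatives of the cutoff, not $\lapl^2$.
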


For the buckling eigenvalues with $\rho \leq 0$, the monotonicity is similar to the vibrating case.

\begin{theorem}[Buckling monotonicity for nonpositive $\rho$]\label{th:bucklingneg}
Fix $j\ge 1$. If $\rho\le 0$, then:
\begin{itemize}
\item $\Lambda_j(\Theta,\rho/S(\Theta)^4)\,S(\Theta)^2$ strictly decreases from $\infty$ to $-\infty$ when $\rho<0$, and from $\infty$ to $0$ when $\rho=0$.
\item $\Lambda_j(\Theta,\rho/T(\Theta)^4)\,T(\Theta)^2$ strictly increases from $1+\rho$ to $\infty$.
\end{itemize}
\end{theorem}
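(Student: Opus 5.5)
The plan is to pull every cap or disk $C(\Theta)$ back to the Euclidean unit disk $\D$ by a single conformal map, and then read monotonicity off pointwise monotonicity of the weights in the resulting Rayleigh quotient. Write $s=1$ for $\Theta\ge0$ and $s=-1$ for $\Theta<0$. Stereographic projection, followed by the dilation $x\mapsto x/T(\Theta)$, identifies $C(\Theta)$ with $\D$. The metric becomes $w\,|dx|^2$ with
\[
w(x)=\frac{T^2}{(1+sa|x|^2)^2},\qquad a=\ta(\Theta/2)^2 .
\]
For this metric $\lapl_g=w^{-1}\lapl$ and $dA_g=w\,dx$, and the Dirichlet integral is conformally invariant. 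Clamped boundary conditions are preserved by this identification. The buckling quotient $\int(\lapl_g u)^2+\rho u^2\,dA_g\big/\int|\grad_g u|^2dA_g$ therefore becomes, after replacing $\rho$ by $\rho/T^4$ and multiplying by $T^2$,
\[
\Lambda_j(\Theta,\rho/T^4)\,T^2=\min_{\dim V=j}\max_{u\in V}\frac{\int_\D (1+sa r^2)^2(\lapl u)^2+\rho\,(1+sa r^2)^{-2}u^2\,dx}{\int_\D|\grad u|^2dx},
\]
with $V\subset H^2_0(\D)$ and $r=|x|$. As $\Theta$ increases through $(-\infty,\pi)$, the quantity $sa$ increases strictly from $-1$ to $+\infty$. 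For each fixed $r\in(0,1)$ the factor $(1+sar^2)^2$ is then strictly increasing, and because $\rho\le0$ so is $\rho(1+sar^2)^{-2}$. Hence each quotient increases pointwise in $\Theta$. Strictness follows from min–max: evaluate the quotient at the larger $\Theta$ on the span of its first $j$ eigenfunctions. That span is finite-dimensional and its nonzero elements have $\lapl u\not\equiv0$, so the pointwise gain is strict.

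\textbf{Endpoint limits in the $T$-scaling.} As $\Theta\to\pi$ we have $a\to\infty$, so the weight on $(\lapl u)^2$ blows up and the quotient tends to $\infty$ uniformly on unit-norm sets. More precisely, $(\lapl u)^2$ weighted by $a^2r^4$ dominates; combining this with an interior Poincaré-type estimate shows $\Lambda_j\to\infty$. As $\Theta\to-\infty$ we have $sa\to-1$, and the quotient converges to the degenerate form with weights $(1-r^2)^2$ and $\rho(1-r^2)^{-2}$. This is exactly the hyperbolic problem on the full plane in Poincaré-disk coordinates. One then checks that its infimum equals $1+\rho$, using monotone convergence of quadratic forms together with the corresponding fact on the bottom of the hyperbolic spectrum. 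This identification is plausibly done later in the paper, and it is the step I expect to be the main obstacle. The reason is that the quotient loses uniform ellipticity at $r=1$, so one needs both a lower bound in the limit and test functions spreading to the boundary that nearly attain the bound for every $j$.

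\textbf{The $S$-scaling.} Use $S=T/(1+sa)$, which follows from $\si(\Theta)=2\ta(\Theta/2)/(1+s\,\ta(\Theta/2)^2)$. After the same pullback the weights become
\[
\Bigl(\frac{1+sar^2}{1+sa}\Bigr)^2 \ \text{on } (\lapl u)^2,\qquad \rho\Bigl(\frac{1+sa}{1+sar^2}\Bigr)^2(1+sa)^2 \ \text{on } u^2 .
\]
For $r<1$ the ratio $(1+sar^2)/(1+sa)$ is strictly decreasing in $sa$. The second weight is a nonpositive constant times a product of factors increasing in $sa$, hence it is nonincreasing. Strict decrease then follows exactly as in the $T$ case.

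\textbf{Endpoint limits in the $S$-scaling.} As $\Theta\to-\infty$ the first weight blows up near $r=1$, which gives $\Lambda_j\to\infty$. As $\Theta\to\pi$, fix any $j$-dimensional test space: if $\rho<0$ the $u^2$ weight behaves like $\rho a^2/r^4\to-\infty$ while the first weight stays bounded by $1$, so $\Lambda_j\to-\infty$. If $\rho=0$ the first weight tends to $r^4$. Take $j$ functions with disjoint supports and rescale them as $u(x/\varepsilon)$ toward the origin. The numerator is then $O(\varepsilon^2)$ while the Dirichlet integral stays fixed, so the limit is $0$, and positivity gives exactly $0$.
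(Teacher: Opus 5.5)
Your monotonicity argument is correct and is essentially the paper's Cases~1--2. The single parameter $sa\in(-1,\infty)$ is a tidy way to absorb the direction reversal in the hyperbolic case. The endpoint analysis, however, has a genuine error. In the $S$-scaling the $u^2$ weight is $\rho S^{-2}w=\rho\,(1+sa)^2/(1+sar^2)^2$, which is the paper's $C_4$; there is no extra factor $(1+sa)^2$.

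Monotonicity is unaffected, but your proof that $\Lambda_j\to-\infty$ as $\Theta\to\pi$ collapses. For fixed $r>0$ the correct weight tends to $\rho r^{-4}$, not $\rho a^2r^{-4}$. It is of size $\rho a^2$ only where $r\lesssim a^{-1/2}$. So ``fix any $j$-dimensional test space'' does not work in general. For example, if $u$ is supported away from the origin, then $Q_a(u)\to\bigl(\int r^4(\lapl u)^2+\rho\int r^{-4}u^2\bigr)/\int|\grad u|^2$, which is a finite number.

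The divergence has to be extracted from the region $r\lesssim a^{-1/2}$, i.e.\ from fixed-size regions on the sphere. The paper does this with a trial space that moves with $a$: bumps of radius $1/(kR)$ inside $\{t\le R^{-1}\}$. Alternatively, your own $\rho=0$ device repairs the argument. Pass to the limiting quotient using $j$ disjointly supported functions supported away from the origin, then dilate by $\varepsilon$. This gives $\varepsilon^2\int|y|^4(\lapl u)^2+\rho\,\varepsilon^{-2}\int|y|^{-4}u^2\to-\infty$.

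The endpoint value $1+\rho$ in the $T$-scaling is part of the statement, and you leave it unproved. Convergence of the forms only reduces it to showing that every min--max level of the limiting problem on all of $\H$ equals $1+\rho$, and that is the whole content.

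The lower bound comes from McKean's inequality $\int|\grad u|^2\ge\frac14\int u^2$ on $\H$ together with Cauchy--Schwarz, which gives $\int(\lapl u)^2\ge\frac14\int|\grad u|^2$. Together these yield $Q\ge T^2/4+4\rho/T^2$. The upper bound needs trial functions on $C(\Theta)$ that nearly saturate \emph{both} inequalities at once; for $\rho<0$ you really need $\int f^2/\int|\grad f|^2\to4$. The paper uses the cut-off ground state $f=\eta\,P_{-1/2}(\cosh\theta)$. It shows the annular error $O(|\Theta|^2)$ is negligible against $\int f^2\gtrsim|\Theta|^3$, then takes $j$ disjoint copies.

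Finally, the two $+\infty$ limits are only asserted. As $sa\to-1$, the weight $\bigl((1-ar^2)/(1-a)\bigr)^2$ blows up at every interior point but stays $O(1)$ in a boundary layer of width about $1-a$; the blow-up is not ``near $r=1$''. As $a\to\infty$, the $T$-weight $(1+ar^2)^2$ stays bounded for $r\lesssim a^{-1/2}$. Almost-everywhere blow-up of a weight does not by itself force $\Lambda_j\to\infty$. You need either the Hardy-type bound $\int(1-r^2)^2(\lapl v)^2\gtrsim\int|\grad v|^2$ used in the paper, or a compactness result like Lemma~\ref{lem:ptwisevanfourthorder--numer}. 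In both cases the $\rho\le0$ term is absorbed via $\rho\int u^2\ge(\rho/\lambda_1^D(\D))\int|\grad u|^2$.
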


For nonnegative $\rho$, the monotonicity becomes mixed: the $S$- and $T$-normalizations in
Theorem~\ref{th:bucklingneg} swap places in Theorem~\ref{th:bucklingpos}.

\begin{theorem}[Buckling monotonicity for nonnegative $\rho$]\label{th:bucklingpos}
Fix $j\ge 1$. If $\rho\ge 0$, then:
\begin{itemize}
\item $\Lambda_j(\Theta,\rho/T(\Theta)^4)\,S(\Theta)^2$ strictly decreases from $\infty$ to $0$,
\item $\Lambda_j(\Theta,\rho/S(\Theta)^4)\,T(\Theta)^2$ strictly increases from $1$ to $\infty$.
\end{itemize}
\end{theorem}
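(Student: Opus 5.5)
Both monotonicity claims will be proved by pulling the eigenvalue problem on $C(\Theta)$ back to a fixed reference domain and comparing Rayleigh quotients, following the approach of \cite{H25} for the Laplacian.

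\textbf{Reference domain.} For the $T$-normalization, use stereographic coordinates. The metric on $C(\Theta)$ is conformal to the Euclidean metric on the disk of radius $T(\Theta)$:
\[
g=\frac{4}{(1\pm r^2/4)^2}\,|dx|^2
\]
in the appropriate normalization. Rescaling $x=T(\Theta)y$ with $y\in\D$ turns the scaled metric into $T(\Theta)^{-2}g=\phi_\Theta(y)^{2}|dy|^2$. The conformal factor $\phi_\Theta$ equals $1$ at $\Theta=0$, and its monotonicity in $\Theta$ is explicit. For $|y|<1$, $\phi_\Theta(y)=(1+T(\Theta)^2|y|^2/4)^{-1}$ is increasing as $\Theta$ decreases through hyperbolic values, and it is decreasing in the spherical range.

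For the $S$-normalization, use geodesic polar coordinates $r=\theta/\Theta$, together with the perimeter weight $\si(\theta)/S(\Theta)$.

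\textbf{Rayleigh quotient for buckling.} The buckling quotient is
\[
\Lambda_j(\Theta,\rho)=\min\max\frac{\int(\lapl u)^2+\rho\int u^2}{\int|\grad u|^2}.
\]
In two dimensions, conformal invariance gives that $\int|\grad u|^2\,dA$ is unchanged under conformal change of metric. Under $g=\phi^2|dy|^2$ one has $\lapl_g=\phi^{-2}\lapl_0$, so
\[
\int(\lapl_g u)^2\,dA_g=\int\phi^{-2}(\lapl_0 u)^2\,dy,\qquad \int u^2\,dA_g=\int\phi^2u^2\,dy.
\]
Apply this to the mixed normalization $\Lambda_j(\Theta,\rho/S^4)\,T^2$ after the $T$-rescaling. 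The quotient becomes
\[
\frac{\int\phi^{-2}(\lapl_0u)^2+\rho\,(T/S)^4\int\phi^2u^2}{\int|\grad u|^2}.
\]
The clamped space $H^2_0(\D)$ is now fixed, so it suffices to show pointwise that $\phi_\Theta^{-2}$ and $(T/S)^4\phi_\Theta^2$ are both nondecreasing in $\Theta$, with strict increase somewhere.

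\textbf{Key pointwise check.} The plan here is to compute $T/S=1/\cos^2(\Theta/2)$ in the spherical case, and the analogue with $\cosh$ in the hyperbolic case. One then checks that
\[
(T/S)\,\phi_\Theta(y)=\frac{1}{\cos^2(\Theta/2)\,(1+\tan^2(\Theta/2)|y|^2)}=\frac{1}{\cos^2(\Theta/2)+\sin^2(\Theta/2)|y|^2}\qquad\text{for }|y|\le 1.
\]
This quantity is increasing in $\Theta\in(0,\pi)$, because $\cos^2-\sin^2|y|^2$ decreases. The hyperbolic side should work the same way via $\cosh^2-\sinh^2|y|^2$ on $\Theta<0$, after carefully tracking the sign convention in $\phi$. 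Since $\rho\ge0$, monotone coefficients give monotone quotients for every test function. Taking min–max then yields monotonicity of each $\Lambda_j$, and strictness follows because eigenfunctions cannot vanish on open sets.

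The first bullet should follow by the same computation in the $S$-picture, using $\rho/T^4$: the weights become $(S/T)^2$-type factors, which decrease. Alternatively, write it as the product of the second bullet with $(S/T)^2$ after accounting for the swap of $\rho$ normalization. I will handle it directly in polar coordinates if the conformal version does not line up.

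\textbf{Endpoint limits.} As $\Theta\to0$, $\phi\to1$, $T/S\to1$, and $\rho$ is effectively scaled away. Hence the second limit is the Euclidean unit disk value; the statement's "$1$" presumably reflects the paper's normalization convention for that limit. As $\Theta\to\pi$, $T\to\infty$, which gives the $\infty$ limit for the second bullet. As $\Theta\to-\infty$, $S\to\infty$ while $\Lambda_j$ stays bounded by the bottom-of-spectrum quantities, so $\rho/T^4$ stays bounded and the first bullet tends to $0$. As $\Theta\to\pi$, $S\to0$ while the cap fills the sphere. The unboundedness there comes from the clamped condition at a point, namely the capacity of points for $H^2$ in two dimensions, which forces $\Lambda_j\gg S^{-2}$.

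\textbf{Main obstacle.} I expect the endpoint limits to be the main obstacle, especially $\Theta\to\pi$ in the first bullet. There I would construct test functions and use the lower bound $\Lambda\ge$ (first clamped Laplacian-type eigenvalue) via $\int(\lapl u)^2\ge\lambda_1\int|\grad u|^2$.
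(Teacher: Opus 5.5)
Your conformal pullback to $\D$ and the coefficient comparison for the second bullet are exactly the paper's argument. With $R=\ta(\Theta/2)$ and $T=2R$, your weights $\phi^{-2}=(1\pm R^2|y|^2)^2$ and $(T/S)^4\phi^2=(1\pm R^2)^4(1\pm R^2|y|^2)^{-2}$ are the paper's $C_1$ and $C_5$.

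For the first bullet, drop the ``product'' shortcut. $(S/T)^2$ times the second bullet is $\Lambda_j(\Theta,\rho/S^4)S^2$, not $\Lambda_j(\Theta,\rho/T^4)S^2$. In any case, a decreasing factor times an increasing one has no definite monotonicity. Instead, compute in the same picture: the weights are $\bigl((1\pm R^2|y|^2)/(1\pm R^2)\bigr)^2$ and $\rho\,(1\pm R^2)^{-2}(1\pm R^2|y|^2)^{-2}$ (the paper's $C_3$, $C_6$), and both decrease in $\Theta$.

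Strictness needs no unique continuation. Every nonzero $v\in H_0^2(\D)$ has $\Delta v\not\equiv 0$, so each quotient is strictly monotone. Then compare at the maximizer of the new quotient over an optimal subspace for the old parameter.

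The genuine gap is the endpoint limits, which are part of the statement. Since $\Theta$ ranges over $(-\infty,\pi)$, ``from $a$ to $b$'' means the limits as $\Theta\to-\infty$ and $\Theta\to\pi$. The point $\Theta=0$ is interior, with value $\Lambda_j(\D,\rho)$; $\rho$ is not scaled away there, since $(T/S)^4\to1$. So the ``$1$'' is not a normalization convention. It is the hyperbolic limit $T(-\infty)^2\cdot\frac14$, coming from the bottom of the spectrum of $-\Delta_{\H^2}$. Proving it needs two ingredients absent from your plan:
\begin{itemize}
\item \textbf{Lower bound.} McKean's inequality $\int|\nabla u|^2\ge\frac14\int u^2$, combined with $(\int|\nabla u|^2)^2\le\int(\Delta u)^2\int u^2$, gives $\int(\Delta u)^2\ge\frac14\int|\nabla u|^2$. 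Hence $T^2\Lambda_j\ge T^2/4\to1$.
\item \textbf{Upper bound.} Use trial functions $\eta\psi$ with $\psi=P_{-1/2}(\cosh\theta)$, which satisfies $-\Delta\psi=\frac14\psi$. The cutoff on a unit-width annulus produces an error $\lesssim|\Theta|^2$, negligible against $\int(\eta\psi)^2\gtrsim|\Theta|^3$. The $\rho$-term is at most $4\rho T^2/S^4\to0$, and disjoint copies handle $j>1$.
\end{itemize}

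For the first bullet your limits are reversed: you claim $0$ at $-\infty$ and $\infty$ at $\pi$, which contradicts the decreasing monotonicity you just derived. The bound $\Lambda_j(\Theta,\rho/T^4)\ge\frac14$ together with $S^2=\sinh^2|\Theta|\to\infty$ gives $\infty$ at $-\infty$.

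At $\pi$, the capacity heuristic ``$\Lambda_j\gg S^{-2}$'' is false. Zero extension gives $H_0^2(C(\Theta_0))\subset H_0^2(C(\Theta))$, and $\rho/T^4$ decreases, so $\Lambda_j(\Theta,\rho/T^4)$ stays bounded while $S\to0$. Hence the limit is $0$; the paper gets this with test functions concentrated at scale $R^{-1}$ in $\D$.

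Finally, ``$T\to\infty$'' for the second bullet at $\pi$ needs a positive lower bound on $\Lambda_j$. Your proposed Dirichlet bound $\lambda_1^D(C(\Theta))$ tends to $0$ as the cap fills the sphere. The spectral gap of $\S^2$, applied to the zero extension, gives $\int(\Delta u)^2\ge 2\int|\nabla u|^2$ directly, hence $T^2\Lambda_j\ge 2T^2\to\infty$.
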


\noindent \emph{Remark.}
The first item of Theorem~\ref{th:bucklingneg} is the only limiting regime considered here in which every fixed-index eigenvalue can diverge to $-\infty$. Its relation to an exterior buckling problem is discussed in Remark~\ref{rem:exterior-buckling}.

The special cases $\tau=0$ and $\rho=0$ are covered by these theorems and yield monotonicity for the standard
vibrating and buckling eigenvalues (i.e., without tensile or elasticity parameters).

\section{Preliminaries: boundary conditions and discrete spectrum}

Let $M$ be a Riemannian manifold and $\Omega \subset M$ be a bounded domain with a smooth boundary. Recall Equations \eqref{eq:vibreigenprob} and \eqref{eq:buckeigenprob}, which we restate here:
\begin{align*}
&\lapl^2 u - \tau \lapl u = \Gamma u \hspace{1.1cm} \text{in } \Omega, \\
&\lapl^2 u + \rho u = \Lambda (-\lapl) u \quad\text{in } \Omega.
\end{align*}

Appropriate boundary conditions are necessary to ensure discreteness of the
resulting spectra. In this paper we work in the clamped case, so the admissible
functions lie in $H_0^2(\Omega)$. Background material in
\cite{C11,CP22,K93,R77} discusses other boundary conditions that one may consider. 

To properly investigate these eigensystems, we provide a brief overview of some Riemannian geometry.

\subsection*{Sobolev spaces and the Hessian}

Most of the information provided here, along with further background material, can be found in \cite{H96}.

The Riemannian metric will be denoted locally by
\[
g=g_{ij}\,dx^i\,dx^j,
\]
with $g^{ij}$ representing components of the inverse metric.
For covariant $m$-tensors $A$ and $B$, we define their pointwise inner product by
\[
\langle A,B\rangle
:=
g^{i_1j_1}\cdots g^{i_mj_m}
A_{i_1\cdots i_m}B_{j_1\cdots j_m}.
\]
Let $\nabla^m u$ be the $m$-th covariant derivative of $u$. Our relevant operators will be the gradient $\grad$ and Hessian $\grad^2$ which have local coordinate representations
\[
(\nabla u)_i=\partial_i u,
\qquad
(\nabla^2 u)_{ij}=\partial_{ij}u-\Gamma_{ij}^k\partial_k u,
\]
where $\Gamma_{i,j}^k$ are the Christoffel symbols.
In particular for $m$-tensors $\grad^m u$ and $\grad^m v$, we have
\[
\langle \nabla^m u,\nabla^m v\rangle
:=
g^{i_1j_1}\cdots g^{i_mj_m}
(\nabla^m u)_{i_1\cdots i_m}(\nabla^m v)_{j_1\cdots j_m},
\]
and
\[
|\nabla^m u|^2=\langle \nabla^m u,\nabla^m u\rangle.
\]
The pointwise inner product allows us to define the Sobolev $H^k(\Omega)$-inner product
\[
\langle u,v\rangle_{H^k(\Omega)}
:=
\sum_{m=0}^k \int_\Omega \langle \nabla^m u,\nabla^m v\rangle\,dV,
\]
and corresponding norm
\[
\|u\|_{H^k(\Omega)}
:=
\left(\sum_{m=0}^k \int_\Omega |\nabla^m u|^2\,dV\right)^{1/2}.
\]
For $k \geq 1$, we define $H^k(\Omega)$
to be the Sobolev space of functions with covariant derivatives up to order $k$ and finite $H^k$ norm. We further define
\[
H_0^k(\Omega)
:=
\overline{C_c^\infty(\Omega)}
\]
where the closure is with respect to the $H^k$ norm.
The set $H_0^2(\Omega)$ will serve as the natural function space for our eigensystems.

\subsection*{Bilinear forms}

Let $u,v \in H_0^2(\Omega)$. The bilinear forms associated to \eqref{eq:vibreigenprob} and \eqref{eq:buckeigenprob} are, respectively,
\begin{align*}
a(u,v)
&:= \int_{\Omega} \langle \nabla^2 u,\nabla^2 v\rangle \, dV
   + \int_{\Omega} \mathrm{Ric}(\nabla u,\nabla v)\, dV
   + \tau\int_{\Omega} \langle \nabla u,\nabla v\rangle \, dV,
\\
b(u,v)
&:= \int_{\Omega} \langle \nabla^2 u,\nabla^2 v\rangle \, dV
   + \int_{\Omega} \mathrm{Ric}(\nabla u,\nabla v)\, dV
   + \rho\int_{\Omega} u v \, dV.
\end{align*}
where $\text{Ric}$ is the Ricci curvature tensor.
The corresponding weak formulations read
\begin{align*}
a(u,v) &= \Gamma\int_{\Omega} u v \, dV,
\\
b(u,v) &= \Lambda\int_{\Omega} \langle \nabla u,\nabla v\rangle \, dV.
\end{align*}

One might try to work with $\int_{\Omega} (\Delta u)(\Delta v)\,dV$ instead of Hessians in the bilinear forms, but boundary terms must be handled carefully (see \cite[Remark 3.3]{CP22}). We will employ a variation of Reilly's identity (see Reilly~\cite{R77} for the original). The identity from \cite[Eq. 3.11]{CP22} reads
\begin{equation}\label{eq:reillypolar}
\begin{split}
\int_\Omega (\Delta u)(\Delta v)\,dV
&=
\int_\Omega \langle \nabla^2 u,\nabla^2 v\rangle\,dV
+\int_\Omega \text{Ric}(\nabla u,\nabla v)\,dV \\
&\quad
+\int_{\partial\Omega}
\Bigl(
(n-1)\mathcal{H}\,\partial_\nu u\,\partial_\nu v
+\Delta_{\partial\Omega}u\,\partial_\nu v
+\partial_\nu u\,\Delta_{\partial\Omega}v
+II(\nabla_{\partial\Omega}u,\nabla_{\partial\Omega}v)
\Bigr)\,dS.
\end{split}
\end{equation}
Here, $\mathcal{H}$ and $II$ represent the shape operator and second fundamental form respectively, and $\partial_\nu$ and $\grad_{\partial \Omega}$ represent the normal and tangential derivatives respectively. The precise definition of $\mathcal{H}$ and $II$ can be found in \cite{CP22}. All we need though is that $u,v \in H_0^2(\Omega)$ force each term in the boundary integral in \eqref{eq:reillypolar} to be zero. As a result, for $u ,v\in H_0^2(\Omega)$ we have
\[
\int_\Omega (\Delta u)(\Delta v)\,dV
=
\int_\Omega \langle \nabla^2 u,\nabla^2 v\rangle\,dV
+\int_\Omega \text{Ric}(\nabla u,\nabla v)\,dV,
\]
and hence the bilinear forms simplify to
\begin{align}
a(u,v)
&= \int_{\Omega} \lapl u \lapl v \, dV
   + \tau\int_{\Omega} \langle \nabla u,\nabla v\rangle \, dV, \label{eq:correct_vib_bi_form}
\\
b(u,v)
&= \int_{\Omega} \lapl u \lapl v \, dV
   + \rho\int_{\Omega} u v \, dV. \label{eq:correct_buc_bi_form}
\end{align}

\subsection*{Function spaces}

We now specify the setup needed for the discrete spectral theorem. First, consider the vibrating problem. We take
\[
S_1 := H^2_0(\Omega),\qquad T_1 := L^2(\Omega),
\]
so that the embedding $\iota:S_1\hookrightarrow T_1$ is compact. To apply the spectral theorem to \eqref{eq:correct_vib_bi_form}, we need a bounded, bilinear form on $S_1$ which is coercive.

Since the Ricci tensor is bounded on $\overline{\Omega}$, we have
\[
\biggl|\int_{\Omega}\mathrm{Ric}(\nabla u,\nabla u)\,dV\biggr|
\le \|\mathrm{Ric}\|_{L^\infty(\Omega)}\,\|\nabla u\|_{L^2(\Omega)}^2.
\]
We control $\|\nabla u\|_{L^2}^2$ by $\|\nabla^2u\|_{L^2}^2$ up to an $L^2$-term via a Cauchy-with-$\varepsilon$ estimate:
\begin{equation}\label{eq:epsinterp}
\|\nabla u\|_{L^2(\Omega)}^2 \le \varepsilon \|\nabla^2 u\|_{L^2(\Omega)}^2 + C_\varepsilon \|u\|_{L^2(\Omega)}^2,
\end{equation}
valid on bounded smooth domains (see, e.g., \cite[Ch.~5]{E10} or \cite{L12}). Combining these bounds yields the inequality
\[
a(u,u)\ge \frac12\|\nabla^2u\|_{L^2(\Omega)}^2 - C\|u\|_{L^2(\Omega)}^2
\qquad\text{for all }u\in H^2_0(\Omega),
\]
with $C$ depending on $\|\mathrm{Ric}\|_{L^\infty(\Omega)}$ and $\tau$. Choosing $K>C$ and defining the shifted form
\[
a_K(u,v):=a(u,v)+K\int_{\Omega}uv\,dV,
\]
we obtain the estimate
\begin{equation}\label{eq:coerciveAK}
a_K(u,u) \ge c\,\|u\|_{H^2(\Omega)}^2, \qquad u\in H^2_0(\Omega),
\end{equation}
for some $c>0$, establishing coercivity of $a_K$. The eigenvalues for $a$ are obtained by subtracting $K$.

Now we look at the buckling case corresponding to the bilinear form \eqref{eq:correct_buc_bi_form}. Here, the compact embedding will be $\iota:S_2\to T_2$ where
$S_2$ is $H_0^2(\Omega)$ and $T_2$ is $H_0^1(\Omega)$. Note in this case that we are not embedding into $L^2(\Omega)$. The inner product
\[
(u,v)_{T_2}:=\int_\Omega \langle \nabla u,\nabla v\rangle\,dV,
\]
is equivalent to the standard $H^1$ norm via the Poincaré inequality
\[
\|u\|_{L^2(\Omega)} \leq C \|\nabla u\|_{L^2(\Omega)},
\qquad u\in H_0^1(\Omega).
\]
Coercivity now follows quickly. Define the shifted form
\[
b_K(u,v):=b(u,v)+K\int_\Omega \langle \nabla u,\nabla v\rangle\,dV.
\]
On $T_2$, Poincar\'e gives $\|u\|_{L^2}\le C\|\nabla u\|_{L^2}$, so the term
$\rho\int_\Omega u^2\,dV$ is controlled by $\|\nabla u\|_{L^2}^2$, and
\[
\left|\int_\Omega \mathrm{Ric}(\nabla u,\nabla u)\,dV\right|
\le \|\mathrm{Ric}\|_{L^\infty(\Omega)}\|\nabla u\|_{L^2(\Omega)}^2.
\]
Hence for $K$ sufficiently large the form $b_K$ is coercive on $S_2$. Subtract $K$ to recover the eigenvalues of the original problem
 
\subsubsection*{Variational characterizations}

The Euler-Lagrange conditions for the bilinear forms in \eqref{eq:correct_vib_bi_form} and \eqref{eq:correct_buc_bi_form} yield the eigensystems
\[
\left\{
\begin{aligned}
    & \lapl^2 u - \tau \lapl u = \Gamma_j u \quad \text{in } \Omega,\\[1mm]
    & u = 0 \hspace{2.7cm} \text{on } \partial \Omega,\\[1mm]
    & |\grad u| = 0 \hspace{2.12cm} \text{on } \partial \Omega,
\end{aligned}
\right.
\hspace{1cm}
\left\{
\begin{aligned}
    & \lapl^2 u + \rho u = \Lambda_j (-\lapl) u \quad \text{in } \Omega,\\[1mm]
    & u = 0 \hspace{3.34cm} \text{on } \partial \Omega,\\[1mm]
    & |\grad u| = 0 \hspace{2.76cm} \text{on } \partial \Omega,
\end{aligned}
\right.
\]
where the eigenvalues form discrete spectra
\[\Gamma_1 \leq \Gamma_2 \leq \Gamma_3 \leq \dots ,\]\[
\Lambda_1 \leq \Lambda_2 \leq \Lambda_3 \leq \dots\]
These spectra are respectively the clamped vibrating eigenvalues and the clamped buckling eigenvalues. The bilinear forms yield the variational characterizations for vibrating eigenvalues
\begin{equation}\label{eq:ultimate_final_vibrating_quotient}
\Gamma_j = \min_{\mathcal{L}} \max_{0 \neq u} \dfrac{\displaystyle \int_\Omega (\lapl u)^2 \, dV + \tau \int_\Omega |\grad u|^2 \, dV}{\displaystyle \int_\Omega |u|^2 \, dV},
\end{equation}
and for buckling eigenvalues
\begin{equation}\label{eq:ultimate_final_buckling_quotient}
\Lambda_j = \min_{\mathcal{L}} \max_{0 \neq u} \dfrac{\displaystyle \int_\Omega (\lapl u)^2 \, dV + \rho \int_\Omega u^2 \, dV}{\displaystyle \int_\Omega |\grad u|^2 \, dV},
\end{equation}
where $\mathcal{L}$ varies over $j$-dimensional subspaces of $H_0^2(\Omega)$ in both cases.
\section{Monotonicity proofs for Theorems \ref{th:vibratingmono}, \ref{th:bucklingneg}, \ref{th:bucklingpos}}

Our focus in this section is on the scaling properties of the vibrating and buckling eigenvalues for caps and geodesic disks. We will be considering the vibrating and buckling Rayleigh quotients, namely
\begin{equation}\label{eq:pre_vibratingmanifoldrq}
\dfrac{\displaystyle \int_{C(\Theta)} (\lapl u)^2 \, dV + \tau \int_{C(\Theta)} |\grad u|^2 \, dV} {\displaystyle \int_{C(\Theta)}  u^2 \, dV},
\end{equation}
\begin{equation}\label{eq:pre_bucklingmanifoldrq}
\dfrac{\displaystyle \int_{C(\Theta)} (\lapl u)^2 \, dV + \rho \int_{C(\Theta)} u^2 \, dV} {\displaystyle \int_{C(\Theta)} |\grad u|^2 \, dV}.
\end{equation}

We will map to the plane under polar coordinates via the conformal map $(\theta, \xi) \mapsto (\ta(\theta/2), \xi)$. The radial variable is $r = 
\ta (\theta/2).$ In the spherical case, this map is stereographic projection. Note that at the boundary, the normal derivative is the same as $\partial_\theta$. Define
\[
w_\pm(r) = \frac{4}{(1 \pm r^2)^2}.
\]Letting $v$ denote the pushforward of $u$ under such maps, we state the following transformation laws:
\begin{gather*}
    \lapl_{C(\Theta)} u = \frac{1}{w_\pm(r)}\lapl_{\R^2} v, \quad \partial_\theta u = \frac{1}{\sqrt{w_\pm(r)}}\partial_r v, \\dV = {w_\pm(r)} \,dA, \quad dS = \sqrt{w_\pm(r)}\, ds,
\end{gather*}
where $dV, dA$ are volume elements on the sphere and plane respectively, and $dS, ds$ the boundary elements on the sphere and plane respectively. One uses $w_+$ when pushing forward from the spherical cap, and $w_-$ from the hyperbolic disk. It follows that \eqref{eq:pre_vibratingmanifoldrq} transforms into
\[
\dfrac{\displaystyle \int_{\D(R)}\frac{1}{w_\pm(r)}(\lapl v)^2 \, dA + \tau \int_{\D(R)}|\grad v|^2 \, dA}
{\displaystyle \int_{\D(R)}  w_\pm(r)\, v^2 \, dA},
\]
where we have defined $R := \ta(\Theta/2).$ The buckling Rayleigh quotient \eqref{eq:pre_bucklingmanifoldrq} transforms similarly, i.e., the $(\lapl v)^2$ and $v^2$ terms acquire coefficients and the Dirichlet integral remains conformally invariant.

Now we linearly rescale the planar radius by writing $r = Rt$ (equivalently, $t=r/R$ so that $t \in [0,1]$ is the radial coordinate in the unit disk), which maps $\D(R)$ to the unit disk $\D$. After this rescaling, we continue to write $v$ for the resulting trial function on $\D$.

Recall that the auxiliary functions $\mu(\Theta)$ and $\nu(\Theta)$ introduced above encode the scaling and normalization factors: $\mu(\Theta)^4$ and $\mu(\Theta)^2$ scale the vibrating and buckling eigenvalues, while $\nu(\Theta)^2$ and $\nu(\Theta)^4$ normalize $\tau$ and $\rho$, respectively. Since $R=\ta(\Theta/2)$, we also view these as functions of $R$ by composition and write $\mu(R):=\mu(\Theta)$ and $\nu(R):=\nu(\Theta)$. (Strictly speaking, the actual factors are the powers $\mu(R)^4,\mu(R)^2,\nu(R)^2,\nu(R)^4$; we refer to $\mu(R)$ and $\nu(R)$ themselves as ``factors'' for brevity.)

Accordingly, the vibrating eigenvalues appearing in Theorem~\ref{th:vibratingmono} (i.e.\ the scaled/normalized quantities $\mu(R)^4\,\Gamma_j(\Theta,\tau/\nu(R)^2)$) have Rayleigh quotient
\begin{equation}\label{eq:vibratingscaledrq}
\dfrac{\displaystyle \int_{\D}\nu(R)^2\frac{1}{w_\pm(Rt)R^2}(\lapl v)^2 \,  dA +  \tau\int_{\D}|\grad v|^2 \, dA}{\displaystyle \int_{\D} \frac{\nu(R)^2}{\mu(R)^4}w_\pm(Rt)R^2v^2 \, dA},
\end{equation}
and the buckling eigenvalues appearing in Theorems~\ref{th:bucklingneg} and~\ref{th:bucklingpos} (i.e.\ the scaled/normalized quantities $\mu(R)^2\,\Lambda_j(\Theta,\rho/\nu(R)^4)$) have Rayleigh quotient
\begin{equation}\label{eq:bucklingscaledrq}
\dfrac{\displaystyle \int_{\D}\mu(R)^2\frac{1}{w_\pm(Rt)R^2}(\lapl v)^2 \,  dA +  \rho\int_{\D} \frac{\mu(R)^2}{\nu(R)^4}w_\pm(Rt)R^2v^2 \, dA}{\displaystyle \int_{\D} |\grad v|^2 \, dA}.
\end{equation}
The eigenvalue min--max principles in \eqref{eq:ultimate_final_vibrating_quotient} and \eqref{eq:ultimate_final_buckling_quotient} apply to these Rayleigh quotients on the trial space $H_0^2(\D)$, since the conformal map and the planar rescaling preserve the clamped boundary conditions.

We observe some appealing structure here. The monotonicity of the scaled eigenvalues in \eqref{eq:vibratingscaledrq} and \eqref{eq:bucklingscaledrq} depends only on the coefficient functions multiplying $(\lapl v)^2$ and $v^2$; if we interchange the roles of $\mu$ and $\nu$, then the coefficients for the vibrating eigenvalues become those for the buckling eigenvalues, and vice-versa.

Despite the various combinations of scaling and normalizing factors, there are only six coefficient functions whose monotonicity we need to analyze.  Since $R=\ta(\Theta/2)$, we have
\[
S(\Theta) := \si(\Theta) = \frac{2R}{1\pm R^2}, \qquad
T(\Theta) := 2\ta(\Theta/2) = 2R,
\]
where the $+$ sign corresponds to the spherical case and the $-$ sign corresponds to the hyperbolic case.

For fixed $t \in [0,1]$, set $u := R^2$ and define
\[
A_\pm(u) := 1 \pm u, \qquad B_\pm(u) := 1 \pm t^2 u.
\]
The admissible domain for spherical ($+$) is $u \in (0,\infty)$, and hyperbolic ($-$) is $u \in (0,1)$. In particular, on the admissible domain we have $A_\pm(u),B_\pm(u)>0$.
The six coefficient functions that we will need are
\begin{align*}
C_1(u) &:= B_\pm(u)^2, 
&\quad C_2(u) &:= B_\pm(u)^{-2},\\
C_3(u) &:= \left(\frac{B_\pm(u)}{A_\pm(u)}\right)^2, 
&\quad C_4(u) &:= \left(\frac{A_\pm(u)}{B_\pm(u)}\right)^2,\\
C_5(u) &:= \frac{A_\pm(u)^4}{B_\pm(u)^2},
&\quad C_6(u) &:= \frac{1}{A_\pm(u)^2B_\pm(u)^2}.
\end{align*}
Note that $C_2 = C_1^{-1}$ and $C_4 = C_3^{-1}$.

\begin{lemma}[Monotonicity of auxiliary functions]
For $t \in [0,1]$ and for $u$ in the admissible domain, the auxiliary functions have the following monotonicity:
\begin{itemize}
    \item $A_+(u)$ is increasing, $A_-(u)$ is decreasing;
    \item $B_+(u)$ is increasing, $B_-(u)$ is decreasing;
    \item the ratio $B_+(u)/A_+(u)$ is decreasing, and $B_-(u)/A_-(u)$ is increasing, with
    \[
    \frac{d}{du}\left(\frac{B_+(u)}{A_+(u)}\right) = -\frac{1-t^2}{A_+(u)^2},
    \qquad
    \frac{d}{du}\left(\frac{B_-(u)}{A_-(u)}\right) = +\frac{1-t^2}{A_-(u)^2};
    \]
    \item the ratio $A_+(u)/B_+(u)$ is increasing, and $A_-(u)/B_-(u)$ is decreasing, with
    \[
    \frac{d}{du}\left(\frac{A_+(u)}{B_+(u)}\right) = +\frac{1-t^2}{B_+(u)^2},
    \qquad
    \frac{d}{du}\left(\frac{A_-(u)}{B_-(u)}\right) = -\frac{1-t^2}{B_-(u)^2}.
    \]
\end{itemize}
\end{lemma}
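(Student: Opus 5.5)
The plan is direct differentiation in $u$, with $t\in[0,1]$ held fixed. Everything rests on the observation that $A_\pm$ and $B_\pm$ are affine in $u$. Their derivatives are $A_\pm'(u)=\pm1$ and $B_\pm'(u)=\pm t^2$.

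\textbf{First two items.} These follow immediately from the derivative formulas. $A_+$ is strictly increasing and $A_-$ is strictly decreasing. $B_\pm$ is nondecreasing (respectively nonincreasing), and strictly so when $t>0$. At $t=0$, $B_\pm\equiv 1$ is constant, so these statements should be read in the weak sense there. I would remark on this degenerate endpoint rather than claim strict monotonicity.

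\textbf{Third item: the ratio $B_\pm/A_\pm$.} I apply the quotient rule:
\[
\frac{d}{du}\frac{B_\pm}{A_\pm}=\frac{B_\pm' A_\pm - B_\pm A_\pm'}{A_\pm^2}
=\frac{\pm t^2(1\pm u)\mp(1\pm t^2u)}{A_\pm^2}.
\]
The numerator simplifies because the $t^2u$ terms cancel (using $(\pm1)^2=1$). What remains is $\pm t^2\mp1=\mp(1-t^2)$. This gives exactly the stated formulas $-(1-t^2)/A_+^2$ and $+(1-t^2)/A_-^2$.

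\textbf{Fourth item: the ratio $A_\pm/B_\pm$.} The same computation gives numerator $A_\pm' B_\pm - A_\pm B_\pm' = \pm(1\pm t^2u)\mp t^2(1\pm u)=\pm(1-t^2)$, with denominator $B_\pm^2$. Alternatively, I can use $\frac{d}{du}(1/f)=-f'/f^2$ with $f=B_\pm/A_\pm$ and then rewrite $A_\pm^2\cdot(B_\pm/A_\pm)^2=B_\pm^2$.

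\textbf{Signs and domain.} The signs follow because $1-t^2\ge0$, and the denominators are positive on the admissible domain. That domain is $u>0$ for the $+$ case and $0<u<1$ for the $-$ case, where $1-u>0$ and $1-t^2u\ge 1-u>0$. Monotonicity is strict for $t<1$. At $t=1$ we have $A_\pm=B_\pm$, so both ratios are identically $1$; this endpoint is again only weakly monotone, and I would note that this is harmless for later use since it concerns a single boundary value of $t$.

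There is no real obstacle here. The only care needed is the sign bookkeeping with $\pm$ and making explicit the degenerate endpoints $t=0$ and $t=1$, where strictness fails.
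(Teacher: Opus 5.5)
Your proposal is correct and matches the paper, which says only that the lemma ``is easily verified by direct differentiation.'' Your quotient-rule computations for $B_\pm/A_\pm$ and $A_\pm/B_\pm$ and your positivity check on the admissible domain are right. You also note that strictness fails at $t=0$ for $B_\pm$ and at $t=1$ for the ratios; the paper leaves this implicit, and it is harmless because a single value of $t$ is a measure-zero set in the later integral comparisons.
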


The lemma is easily verified by direct differentiation. The monotonicity of $C_1(u),\ldots,C_6(u)$ follows immediately: powers preserve monotonicity, reciprocals reverse it (on the admissible domain where all functions are positive), and products of positive functions with the same monotonicity preserve it. Table~\ref{tab:coeff-monotonicity} summarizes the results.

\begin{table}[ht]
\centering
\begin{tabular}{@{}lcccccc@{}}
\toprule
Case & $C_1$ & $C_2$ & $C_3$ & $C_4$ & $C_5$ & $C_6$ \\
\midrule
$+$ (spherical) & $\uparrow$ & $\downarrow$ & $\downarrow$ & $\uparrow$ & $\uparrow$ & $\downarrow$ \\
$-$ (hyperbolic) & $\downarrow$ & $\uparrow$ & $\uparrow$ & $\downarrow$ & $\downarrow$ & $\uparrow$ \\
\bottomrule
\end{tabular}
\caption{Monotonicity summary for the six coefficients.}
\label{tab:coeff-monotonicity}
\end{table}

\subsubsection*{Monotonicity proofs for Theorems~\ref{th:vibratingmono}--\ref{th:bucklingpos}}

The variational characterizations for the scaled and normalized eigenvalues in Theorems~\ref{th:vibratingmono}--\ref{th:bucklingpos} read, respectively,
\begin{equation}\label{eq:rq_square}
\mu(\Theta)^4\,\Gamma_j\!\left(\Theta,\frac{\tau}{\nu(\Theta)^2}\right)
= \min_{\mathcal L}\ \max_{\substack{0\neq v\in\mathcal L}}
\frac{\displaystyle \int_{\D} V_1(R,t)\,(\Delta v)^2\,dA + \tau \int_{\D}|\nabla v|^2\,dA}
{\displaystyle \int_{\D} V_2(R,t)\,v^2\,dA},
\end{equation}
\begin{equation}\label{eq:rq_squaresquare}
\mu(\Theta)^2\,\Lambda_j\!\left(\Theta,\frac{\rho}{\nu(\Theta)^4}\right)
= \min_{\mathcal L}\ \max_{\substack{0\neq v\in\mathcal L}}
\frac{\displaystyle \int_{\D} B_1(R,t)\,(\Delta v)^2\,dA + \rho \int_{\D} B_2(R,t)\,v^2\,dA}
{\displaystyle \int_{\D}|\nabla v|^2\,dA},
\end{equation}
where $\mathcal L$ varies over $j$-dimensional subspaces of $H_0^2(\D)$, and the coefficient functions $V_1,V_2$ are determined by the vibrating Rayleigh quotient \eqref{eq:vibratingscaledrq} while $B_1,B_2$ are determined by the buckling Rayleigh quotient \eqref{eq:bucklingscaledrq}. In each case we use the substitutions $r=Rt$ and $u=R^2$.

Recall that in Theorems~\ref{th:vibratingmono}--\ref{th:bucklingneg} the scaling/normalizing factors $\mu(\Theta),\nu(\Theta)$ are chosen from
\[
S(\Theta)=\sin\Theta,\qquad T(\Theta)=2\tan(\Theta/2),
\]
and in Theorem~\ref{th:bucklingpos} we use mixed choices (e.g.\ in the first part of Theorem~\ref{th:bucklingpos} we take $\mu(\Theta)=S(\Theta)$ and $\nu(\Theta)=T(\Theta)$). 

We will phrase the monotonicity analysis in terms of $R^2$. In the spherical case ($+$), $\Theta\mapsto R=\tan(\Theta/2)$ is increasing on the relevant range, hence $u$ increases with $\Theta$. In the hyperbolic case ($-$), $\Theta$ increases from $-\infty$ to $0$ while $R$ decreases from $1$ to $0$, so $u$ decreases with $\Theta$; thus monotonicity in $u$ translates to the \emph{opposite} monotonicity in $\Theta$, in the hyperbolic setting.

Finally, for any nonzero $v\in H_0^2(\D)$ we have
\[
\int_{\D} v^2\,dA>0,\qquad \int_{\D}|\nabla v|^2\,dA>0,\qquad \int_{\D}(\Delta v)^2\,dA>0.
\]
The second and third inequalities hold because if $\Delta v=0$ or $|\grad v| = 0$ in $\D$, then
\[
\int_{\D} |\nabla v|^2\,dA=-\int_{\D} v\,\Delta v\,dA=0,
\]
hence $v$ is constant, and since its trace vanishes on $\partial\D$, we get $v\equiv 0$.

\medskip

\noindent\textbf{Case 1:} $\mu(\Theta)=S(\Theta)$, $\nu(\Theta)=S(\Theta)$.  
In this scenario (after rewriting the coefficients using $r=Rt$ and $u=R^2$), we obtain
\[
V_1=C_3,\qquad V_2=C_4,\qquad B_1=C_3,\qquad B_2=C_4,
\]
so the monotonicity of coefficients follows directly from Table~\ref{tab:coeff-monotonicity}.
In the spherical case, $C_3$ is strictly decreasing and $C_4$ is strictly increasing, so for every nonzero $v$ the quotient in \eqref{eq:rq_square} is strictly decreasing in $u$ when $\tau\ge 0$, and hence $\mu(\Theta)^4\Gamma_j(\Theta,\tau/\nu(\Theta)^2)$ is decreasing. Thus, the first part of Theorem~\ref{th:vibratingmono} is verified except for the strictness of the monotonicity which we address below.
For the buckling quotient \eqref{eq:rq_squaresquare}, if $\rho\le 0$ then at least one term in the numerator is strictly decreasing in $u$ (since $C_3$ is strictly decreasing and $\rho\,C_4$ is non-increasing), so the scaled buckling eigenvalue is decreasing as well. The first part of Theorem~\ref{th:bucklingneg} is verified aside from strictness of the monotonicity. 

With that, in the remaining cases, we will refrain from mentioning that strictness will be proved later.

\medskip

\noindent\textbf{Case 2:} $\mu(\Theta)=T(\Theta)$, $\nu(\Theta)=T(\Theta)$.  
Here we obtain
\[
V_1=C_1,\qquad V_2=C_2,\qquad B_1=C_1,\qquad B_2=C_2.
\]
Again by Table~\ref{tab:coeff-monotonicity}, in the spherical case $C_1$ is strictly increasing and $C_2$ is strictly decreasing, so the vibrating quotient \eqref{eq:rq_square} is strictly increasing in $u$ when $\tau\ge 0$, yielding monotonicity of the corresponding scaled vibrating eigenvalues (the second part of Theorem~\ref{th:vibratingmono}).  
For the buckling quotient \eqref{eq:rq_squaresquare}, if $\rho\le 0$ then at least one term in the numerator is strictly increasing in $u$ (since $C_1$ is strictly increasing and $\rho\,C_2$ is non-decreasing), hence the scaled buckling eigenvalues are increasing (the second part of Theorem~\ref{th:bucklingneg}).

\medskip

\noindent\textbf{Case 3:} $\mu(\Theta)=S(\Theta)$, $\nu(\Theta)=T(\Theta)$.  
For the \emph{buckling} coefficients we obtain
\[
B_1=C_3,\qquad B_2=C_6,
\]
so in the spherical case at least one term is strictly decreasing and in the hyperbolic case at least one term is strictly increasing; therefore, for $\rho\ge 0$ the numerator in \eqref{eq:rq_squaresquare} is strictly monotone in $u$, and the first part of Theorem~\ref{th:bucklingpos} follows.

\noindent \emph{Remark.} For the \emph{vibrating} coefficients in \eqref{eq:rq_square}, with $\mu(\Theta) = S(\Theta)$ and $\nu(\Theta) = T(\Theta)$, we find
\[
V_1=C_1,\qquad V_2=C_5.
\]
Since $V_1$ and $V_2$ have the same monotonicity, the coefficient table alone does not determine the monotonicity direction of the vibrating eigenvalues for this choice of $\mu$ and $\nu$; we therefore do not claim a vibrating monotonicity conclusion in this case.

\medskip

\noindent\textbf{Case 4:} $\mu(\Theta)=T(\Theta)$, $\nu(\Theta)=S(\Theta)$.  
For the \emph{buckling} coefficients we obtain
\[
B_1=C_1,\qquad B_2=C_5,
\]
so in the spherical case at least one term strictly increases and in the hyperbolic case at least one term strictly decreases; hence for $\rho\ge 0$ the numerator in \eqref{eq:rq_squaresquare} is strictly monotone in $u$, proving the second part of Theorem~\ref{th:bucklingpos}.

\noindent \emph{Remark.} For the \emph{vibrating} coefficients, with $\mu(\Theta) = T(\Theta)$ and $\nu(\Theta) = S(\Theta)$ we get
\[
V_1=C_3,\qquad V_2=C_6.
\]
As in Case~3, $V_1$ and $V_2$ here have the same monotonicity direction, so the coefficient table alone does not determine the monotonicity of the vibrating eigenvalues for this choice of $\mu$ and $\nu$.

\subsection*{Strict monotonicity of the eigenvalues}
Fix $0<R_1<R_2$, and let $Q_R(v)$ denote the relevant Rayleigh quotient on the fixed
trial space $H_0^2(\D)$. In each case considered above, the coefficient monotonicity shows that
\[
Q_{R_2}(v)<Q_{R_1}(v)
\]
for every nonzero trial function $v$ (or else the reverse inequality, depending on
the case).

Let $\mathcal L_1$ be a
$j$-dimensional subspace realizing the min--max value at $R_1$. Take $v$ to be the function that maximizes $Q_{R_2}$ over $\mathcal{L}_1$. By applying strict monotonicity to this $v$, we find
\[
\lambda_j(R_2)\le \max_{0\neq v\in\mathcal L_1} Q_{R_2}(v)
< \max_{0\neq v\in\mathcal L_1} Q_{R_1}(v)
= \lambda_j(R_1),
\]
and hence $\lambda_j(R_2)<\lambda_j(R_1)$ where $\lambda_j$ is the relevant eigenvalue. Argue similarly if the monotonicity goes in the opposite direction. Thus, strict monotonicity follows in all cases.
\newpage

\section{Limiting values}

As the geodesic disks expand to fill hyperbolic space $(\Theta \to -\infty)$ or spherical space $(\Theta \to \pi)$, the limiting values of the scaled and normalized eigenvalues depend on the geometry of the ambient space and our choice of scaling. In many cases, the spectrum collapses to 0 or explodes to $\infty$, but there are some non-standard "exotic" cases exhibiting convergence to finite non-zero values. Tables \ref{tab:vibrating-limits-conj}, \ref{tab:buckling-neg-limits-conj}, and \ref{tab:buckling-pos-mixed-limits-conj} provide an overview.


\begin{table}[ht]
  \centering
  \begin{tabular}{@{}lcc@{}}
    \toprule
    (scaling,\,normalize) & $\Theta\to-\infty$ & $\Theta\to\pi$ \\
    \midrule
    $(S,S)$ & $\Gamma \to \infty$ & $\Gamma \to 0$ \\
    $(T,T)$ & $\Gamma \to 1+\tau$ & $\Gamma \to \infty$ \\
    \bottomrule
  \end{tabular}
  \caption{Vibrating ($\tau\ge 0$): limiting behavior of the scaled eigenvalues in Theorem \ref{th:vibratingmono}.}
  \label{tab:vibrating-limits-conj}
\end{table}

\begin{table}[ht]
  \centering
  \begin{tabular}{@{}lcc@{}}
    \toprule
    (scaling,\,normalize) & $\Theta\to-\infty$ & $\Theta\to\pi$ \\
    \midrule
    $(S,S)$, $\rho<0$ & $\Lambda \to \infty$ & $\Lambda \to -\infty$ \\
    $(S,S)$, $\rho=0$ & $\Lambda \to \infty$ & $\Lambda \to 0$ \\
    $(T,T)$, $\rho\leq0$ & $\Lambda \to 1+\rho$ & $\Lambda \to \infty$ \\
    \bottomrule
  \end{tabular}
  \caption{Buckling ($\rho\le 0$): limiting behavior of the scaled eigenvalues in Theorem \ref{th:bucklingneg}.}
  \label{tab:buckling-neg-limits-conj}
\end{table}

\begin{table}[ht]
  \centering
  \begin{tabular}{@{}lcc@{}}
    \toprule
    (scaling,\,normalize) & $\Theta\to-\infty$ & $\Theta\to\pi$ \\
    \midrule
    $(S,T)$ & $\Lambda \to \infty$ & $\Lambda \to 0$ \\
    $(T,S)$ & $\Lambda \to 1$ & $\Lambda \to \infty$ \\
    \bottomrule
  \end{tabular}
  \caption{Buckling ($\rho\ge 0$): limiting behavior of the mixed-factor scaled eigenvalues in Theorem \ref{th:bucklingpos}.}
  \label{tab:buckling-pos-mixed-limits-conj}
\end{table}

Our proofs below go systematically row-by-row through the tables. We start with the vibrating case. To assist in these goals, we first prove two useful lemmas.

\begin{lemma}[Pointwise vanishing of the denominator]\label{lem:ptwisevanfourthorder}

Let $\Omega \subset \R^2$ be a bounded domain with smooth boundary. For each $R > 0$, let $w_R:\Omega \to (0,1]$ be a measurable function with $\text{ess inf } w_R > 0$. Fix $\tau \geq 0$ and $\rho \geq 0$. Let $\Gamma_k(w_R,\tau)$ and $\Lambda_k(w_R,\rho)$ be given by the variational characterizations
$$
\Gamma_k(w_R,\tau)
=
\min_{\mathcal L}\ \max_{\substack{0\neq v\in \mathcal L}}
\frac{\displaystyle \int_\Omega (\Delta v)^2\,dA + \tau \int_\Omega |\nabla v|^2\,dA}
{\displaystyle \int_\Omega v^2\,w_R\,dA},
$$
$$
\Lambda_k(w_R,\rho)
=
\min_{\mathcal L}\ \max_{\substack{0\neq v\in \mathcal L}}
\frac{\displaystyle \int_\Omega (\Delta v)^2\,dA + \rho \int_\Omega v^2\,dA}
{\displaystyle \int_\Omega |\nabla v|^2\,w_R\,dA},
$$
where $k \geq 1$ and $\mathcal L$ varies over $k$-dimensional subspaces of $H_0^2(\Omega)$. 

If $\lim_{R\to\infty} w_R = 0$ a.e., then $\lim_{R\to\infty} \Gamma_k(w_R,\tau)=\infty$ $\lim_{R\to\infty} \Lambda_k(w_R,\rho)=\infty$.
\end{lemma}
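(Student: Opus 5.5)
It suffices to prove the case $k=1$: the min--max characterization gives $\Gamma_k\ge\Gamma_1$ and $\Lambda_k\ge\Lambda_1$. Both quotients are nonnegative because $\tau,\rho\ge0$, so we only need $\Gamma_1(w_R,\tau)\to\infty$ and $\Lambda_1(w_R,\rho)\to\infty$. The argument goes by contradiction combined with compactness. Suppose, along some sequence $R_n\to\infty$, we have $\Gamma_1(w_{R_n},\tau)\le M<\infty$. The hypothesis $\operatorname{ess\,inf} w_R>0$ makes the weighted quotient comparable to the unweighted one for each fixed $R$. Hence the minimum is attained, by the same coercivity and compact-embedding argument used in the preliminaries. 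Let $v_n\in H_0^2(\Omega)$ be a minimizer, normalized so that $\int_\Omega (\Delta v_n)^2\,dA=1$. This normalization is legitimate because $\int(\Delta v)^2>0$ for every nonzero $v\in H_0^2$, as noted earlier in the paper.

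The key estimate is that on $H_0^2(\Omega)$ the quantity $\|\Delta v\|_{L^2}$ controls the full $H^2$ norm. Indeed, in the Euclidean case Reilly's identity \eqref{eq:reillypolar} has $\mathrm{Ric}=0$ and vanishing boundary terms, so $\|\nabla^2 v\|^2=\|\Delta v\|^2$; Poincar\'e's inequality then bounds the lower-order terms. So $(v_n)$ is bounded in $H^2$. After passing to a subsequence, $v_n\rightharpoonup v$ weakly in $H_0^2$ and strongly in $H^1$; the embedding into $H^1$ is compact by Rellich on a bounded smooth domain.

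For the vibrating problem, the bound $\Gamma_1\le M$ gives
\[
\int_\Omega v_n^2 w_{R_n}\,dA\ \ge\ \frac1M\Bigl(1+\tau\int_\Omega|\nabla v_n|^2\,dA\Bigr)\ \ge\ \frac1M .
\]
On the other hand, $v_n\to v$ in $L^2$, $0<w_{R_n}\le 1$, and $w_{R_n}\to 0$ a.e. We split
\[
\int v_n^2 w_n\,dA \le 2\int (v_n-v)^2\,dA + 2\int v^2 w_n\,dA .
\]
The first term tends to zero by strong convergence. The second tends to zero by dominated convergence, with dominating function $v^2\in L^1$. This contradicts the lower bound $1/M$.

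The buckling case is identical, with $|\nabla v_n|^2$ in place of $v_n^2$. Here strong $H^1$ convergence gives $\nabla v_n\to\nabla v$ in $L^2$, and dominated convergence applies with dominating function $|\nabla v|^2$; the numerator is again at least $1$. The main subtle point is the uniform $H^2$ bound from $\|\Delta v\|_{L^2}$ alone, which requires the clamped boundary condition. The problem is also set up so that the weight appears only in the denominator, and the weight tends to zero only pointwise, not uniformly. That is why we need strong convergence of the normalized minimizers, rather than a fixed test function, to beat the pointwise decay. We do not even need the existence of minimizers: near-minimizers with quotient at most $M+1$ work equally well.
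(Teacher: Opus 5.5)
Your proof is correct, but it is organized differently from the paper's argument.

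\textbf{The paper's route.} It follows \cite[Lemma 8]{LL22}. It truncates the weight to $m_R=\max(\delta,w_R)$ and sandwiches $\Gamma_k(m_R,\tau)$ between $\Gamma_k(1,\tau)$ and $\delta^{-1}\Gamma_k(1,\tau)$. It then takes genuine eigenfunctions $f_R$, $g_R$, normalized in the \emph{unweighted} denominator, and extracts a limit that is weak in $H^2$ and strong in $L^2$ (resp.\ $H^1$). Passing to the limit in the weak eigenvalue equation with $m_R\to\delta$ a.e.\ identifies the limit as an eigenvalue of the constant-weight-$\delta$ problem. This gives $\liminf_{R}\Gamma_k(m_R,\tau)\ge\delta^{-1}\Gamma_1(1,\tau)$, and finally $\delta\to0$.

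\textbf{Your route.} You reduce to $k=1$ and normalize the \emph{numerator} by $\|\Delta v_n\|_{L^2}=1$. A bounded quotient then keeps the weighted denominator above a fixed positive constant. Strong $L^2$ (resp.\ $H^1$) convergence together with dominated convergence drives that denominator to zero. In effect, you show that the compact embedding followed by multiplication by $w_R$ tends to zero in operator norm.

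\textbf{What each buys.} Your argument is shorter and needs less structure. It uses no truncation parameter and no eigenfunction equation, since near-minimizers suffice. It also never needs the limit function to be nonzero, nor does it need to identify a limiting spectral problem. The paper's template, in contrast, gives an explicit intermediate bound for each $\delta$. It also transfers almost verbatim to the companion Lemma~\ref{lem:ptwisevanfourthorder--numer}, simply by replacing $m_R$ with $m_R^{-1}$. Your approach would need an extra step there: the exploding weight sits on $(\Delta v)^2$, where only weak convergence is available. You would therefore have to use weak lower semicontinuity of $\int(\Delta v_n)^2\min(w_n^{-1},K)\,dA$ in place of strong convergence.
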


\begin{lemma}[Pointwise explosion of the numerator]\label{lem:ptwisevanfourthorder--numer}
Let $\Omega \subset \R^2$ be a bounded domain with smooth boundary. For each $R > 0$, let $w_R: \Omega\to (0,1]$ be a measurable function with $\text{ess inf } w_R > 0$. Fix $\tau \geq 0$ and $\rho \geq 0$. Let $\Gamma_k(w_R, \tau)$ and $\Lambda_k(w_R, \rho)$ be given by the variational characterizations
$$
\Gamma_k(w_R, \tau)
= \min_{\mathcal L}\ \max_{\substack{0\neq v\in\mathcal L}}
\frac{\displaystyle \int_{\Omega}\left((\Delta v)^2 + \tau |\nabla v|^2\right)w_R^{-1}\,dA}{\displaystyle \int_{\Omega} v^2\,dA},
$$
$$
\Lambda_k(w_R, \rho)
= \min_{\mathcal L}\ \max_{\substack{0\neq v\in\mathcal L}}
\frac{\displaystyle \int_{\Omega}\left((\Delta v)^2 + \rho v^2)\right)w_R^{-1}\,dA}{\displaystyle \int_{\Omega} |\nabla v|^2\,dA},
$$
where $k \geq 1$ and $\mathcal L$ varies over $k$-dimensional subspaces of $H_0^2(\Omega)$. 

If $\lim_{R\to\infty} w_R = 0$ a.e., then $\lim_{R\to\infty} \Gamma_k(w_R,\tau)=\infty$ and $\lim_{R\to\infty} \Lambda_k(w_R,\rho)=\infty$.
\end{lemma}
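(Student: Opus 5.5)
We prove the statement for $\Gamma_k$; the argument for $\Lambda_k$ is identical after exchanging the roles of $v^2$ and $|\nabla v|^2$ in the denominator and of $\tau|\nabla v|^2$ and $\rho v^2$ in the numerator. The plan is a lower bound by contradiction that combines compactness of $H_0^2(\Omega)\hookrightarrow H^1(\Omega)$ with pointwise convergence $w_R^{-1}\to\infty$. Since the numerator only increases if we drop the nonnegative $\tau$-term (recall $\tau,\rho\ge 0$, and $w_R^{-1}\ge 1$), it suffices to show that
\[
\min_{\mathcal L}\max_{0\ne v\in\mathcal L}\frac{\int_\Omega (\Delta v)^2 w_R^{-1}\,dA}{\int_\Omega v^2\,dA}\to\infty .
\]
By min--max this $k$-th value dominates the first one, so the case $k=1$ suffices. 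Thus it is enough to show that
\[
\inf_{0\ne v\in H_0^2(\Omega)}\frac{\int_\Omega (\Delta v)^2 w_R^{-1}\,dA}{\int_\Omega v^2\,dA}\to\infty .
\]

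Suppose instead that along a sequence $R_n\to\infty$ there are $v_n\in H_0^2(\Omega)$ with $\|v_n\|_{L^2}=1$ and $\int_\Omega(\Delta v_n)^2w_{R_n}^{-1}\,dA\le M$. Because $w_R\le 1$, this gives $\|\Delta v_n\|_{L^2}^2\le M$. On $H_0^2(\Omega)$ the quantity $\|\Delta v\|_{L^2}$ is an equivalent norm (by elliptic regularity, or by the flat Reilly identity $\int(\Delta v)^2=\int|\nabla^2v|^2$ together with Poincaré). Hence $(v_n)$ is bounded in $H^2$, and after passing to a subsequence $v_n\rightharpoonup v$ weakly in $H_0^2$ and $v_n\to v$ strongly in $L^2$. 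In particular $\|v\|_{L^2}=1$, so $v\ne0$. For $\Lambda_k$ the same argument gives strong convergence in $H^1$ and $\|\nabla v\|_{L^2}=1$.

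Next we show that $\Delta v=0$ a.e. This is the key step. Fix $\varepsilon>0$. Because $w_{R_n}\to0$ a.e., Egorov's theorem provides a set $E$ with $|\Omega\setminus E|<\varepsilon$ on which $w_{R_n}\to0$ uniformly. This requires $\Omega$ to have finite measure, which holds since $\Omega$ is bounded. For large $n$ we have $w_{R_n}\le\delta$ on $E$, and then
\[
\int_E(\Delta v_n)^2\,dA\le \delta M .
\]
Since $\Delta v_n\rightharpoonup\Delta v$ weakly in $L^2(E)$, weak lower semicontinuity of the norm gives $\int_E(\Delta v)^2\,dA\le\delta M$ for every $\delta>0$, so $\Delta v=0$ a.e. on $E$. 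Letting $\varepsilon\to0$ shows $\Delta v=0$ a.e. on $\Omega$. Then, by the argument already given in the previous section, $\int|\nabla v|^2=-\int v\,\Delta v=0$ forces $v\equiv0$. This contradicts $\|v\|_{L^2}=1$, and the $\Gamma_k$ case is proved.

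For $\Lambda_k$ the same contradiction gives $v\equiv0$, which contradicts $\|\nabla v\|_{L^2}=1$.

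The main obstacle is this passage from a pointwise a.e. hypothesis on $w_R$ to an integral conclusion, since no uniform convergence is assumed. The Egorov argument handles it cleanly, with weak lower semicontinuity used on each fixed set $E$. The only other ingredient that needs care is the coercivity $\|v\|_{H^2}\lesssim\|\Delta v\|_{L^2}$ on $H_0^2(\Omega)$. This follows by integrating by parts for $C_c^\infty$ functions and taking the closure.

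The assumption $\operatorname{ess\,inf}w_R>0$ is used only so that the quotients are finite and the min--max values are well defined for each fixed $R$.
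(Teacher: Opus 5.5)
Your proof is correct, but it takes a somewhat different route from the paper's.

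The paper replaces $w_R$ by the truncated weight $m_R=\max(\delta,w_R)$. Then $m_R^{-1}\le w_R^{-1}$ and $\Gamma_k(1,\tau)\le\Gamma_k(m_R,\tau)\le\delta^{-1}\Gamma_k(1,\tau)$. This two-sided bound gives $R$-independent $H^2$ control on genuine eigenfunctions $f_R,g_R$ of the truncated problems. The paper then follows \cite[Lemma 8]{LL22}, using $m_R^{-1}\to\delta^{-1}$ a.e., to prove $\liminf_{R\to\infty}\Gamma_k(m_R,\tau)\ge\delta^{-1}\Gamma_1(1,\tau)$ (likewise for $\Lambda_k$), and finally sends $\delta\to0$.

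You instead keep the original weight, drop the nonnegative $\tau$- and $\rho$-terms, reduce to the infimum of the quotient, and argue by contradiction. Your contradiction hypothesis supplies directly the uniform energy bound $M$ that the truncation provides in the paper. Egorov's theorem together with weak lower semicontinuity on the large set $E$ replaces the limiting step the paper cites from \cite{LL22}; it forces the weak limit to be harmonic and hence zero.

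What each route buys: your version is self-contained and needs only normalized near-minimizers, not eigenfunctions of a weighted problem. The paper's version gives an explicit lower bound at each truncation level $\delta$, and it runs in exact parallel with its proof of Lemma~\ref{lem:ptwisevanfourthorder}, so both lemmas share one template.

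The common core is the same in both. Since $w_R\le1$, bounded weighted energy gives a uniform $H^2$ bound. Strong $L^2$ (resp.\ $H^1$) convergence then preserves the normalization in the limit.
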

These lemmas are fourth-order generalizations of the weighted second-order argument used in \cite{H25}, which itself builds on \cite[Lemma 8]{LL22}. The motivation is that in our Rayleigh quotients \eqref{eq:rq_square} and \eqref{eq:rq_squaresquare}, some of the limiting cases will have a pointwise vanishing coefficient in the denominator or exploding coefficient in the numerator.

\begin{proof}[Proof of Lemma \ref{lem:ptwisevanfourthorder}]
The proof will be quite similar to the ones referenced above. Some details will be omitted if they are clear from these proofs. We let $0 < \delta < 1$ and define the weight $m_R := \max(\delta, w_R) \geq w_R$. The variational characterizations of the eigenvalues associated to the weight $m_R$ are
\begin{equation*}
\Gamma_k(m_R, \tau)
= \min_{\mathcal L}\ \max_{\substack{0\neq v\in\mathcal L}}
\frac{\displaystyle \int_{\Omega}(\Delta v)^2\,dA + \tau \int_{\Omega}|\nabla v|^2\,dA}{\displaystyle \int_{\Omega} v^2\,m_R\,dA},
\end{equation*}
\begin{equation*}
\Lambda_k(m_R, \rho)
= \min_{\mathcal L}\ \max_{\substack{0\neq v\in\mathcal L}}
\frac{\displaystyle \int_{\Omega}(\Delta v)^2\,dA + \rho \int_{\Omega}v^2\,dA}{\displaystyle \int_{\Omega} |\nabla v|^2\,m_R\,dA},
\end{equation*}
where $\mathcal{L}$ varies over $k$-dimensional subspaces of $H_0^2(\Omega)$. Since all terms in the numerators are nonnegative, we have
$$
\Gamma_k(w_R,\tau) \geq \Gamma_k(m_R,\tau),
$$
$$
\Lambda_k(w_R,\rho) \geq \Lambda_k(m_R,\rho).
$$
Since $\delta \le m_R \le 1$, inspection of the variational characterizations also shows that
\begin{equation}\label{lemmaweak1}
\delta^{-1}\Gamma_k(1, \tau) \geq \Gamma_k(m_R, \tau) \geq \Gamma_k(1, \tau),
\end{equation}
\begin{equation}\label{lemmaweak2}
\delta^{-1}\Lambda_k(1, \rho) \geq \Lambda_k(m_R, \rho) \geq \Lambda_k(1, \rho).
\end{equation}
Our goal is to show
\begin{equation}\label{lemmaweakgoal1}
\liminf_{R \to \infty} \Gamma_k(m_R,\tau)\geq \delta^{-1}\Gamma_1(1,\tau),
\end{equation}
\begin{equation}\label{lemmaweakgoal2}
\liminf_{R \to \infty} \Lambda_k(m_R,\rho)\geq \delta^{-1}\Lambda_1(1,\rho).
\end{equation}
Each quantity on the right-hand side is positive, and so the lemma follows from \eqref{lemmaweakgoal1} and \eqref{lemmaweakgoal2} by letting $\delta\to 0$.

Following along with \cite[Lemma 8]{LL22}, we generate weak eigenfunctions $f_R$ for $\Gamma_k(m_R, \tau)$ and $g_R$ for $\Lambda_k(m_R,\rho)$ that satisfy
$$
\int_\Omega \lapl \varphi \lapl f_R \, dA + \tau \int_\Omega \grad \varphi \cdot \grad f_R\, dA
= \Gamma_k(m_R, \tau)\int_\Omega \varphi f_R\, m_R\, dA,
$$
$$
\int_\Omega \lapl \varphi \lapl g_R \, dA + \rho \int_\Omega \varphi g_R\, dA
= \Lambda_k(m_R, \rho)\int_\Omega \grad \varphi\cdot \grad g_R\, m_R\, dA.
$$
We normalize by $\int_\Omega f_R^2\,dA = 1$ and $\int_\Omega |\grad g_R|^2\,dA = 1$ for each $R$. Then the eigenfunctions are uniformly $H^2$-bounded in $R$ for both cases, by choosing $\varphi=f_R$ and $\varphi=g_R$ and using \eqref{lemmaweak1} and \eqref{lemmaweak2}. By the Rellich-Kondrachov theorem, after passing to a subsequence as $R\to\infty$, there exist functions $f,g \in H_0^2(\Omega)$ such that
$f_R \to f$ weakly in $H^2$ and strongly in $L^2$, and $g_R \to g$ weakly in $H^2$ and strongly in $H^1$. The key point for proving \eqref{lemmaweakgoal1} and \eqref{lemmaweakgoal2} is that $m_R \to \delta$ a.e. as $R\to\infty$, since $w_R \to 0$ a.e. The rest of the proof of \cite[Lemma 8]{LL22} now applies, with the slight modification that the argument for $g_R$ uses strong $H^1$ convergence. This proves \eqref{lemmaweakgoal1} and \eqref{lemmaweakgoal2}.
\end{proof}
\begin{proof}[Proof of Lemma \ref{lem:ptwisevanfourthorder--numer}] This time, the variational characterizations of the eigenvalues associated to the weight $m_R$ are
$$
\Gamma_k(m_R, \tau)
= \min_{\mathcal L}\ \max_{\substack{0\neq v\in\mathcal L}}
\frac{\displaystyle \int_{\Omega}\left((\Delta v)^2 + \tau |\nabla v|^2\right)m_R^{-1}\,dA}{\displaystyle \int_{\Omega} v^2\,dA},
$$
$$
\Lambda_k(m_R, \rho)
= \min_{\mathcal L}\ \max_{\substack{0\neq v\in\mathcal L}}
\frac{\displaystyle \int_{\Omega}\left((\Delta v)^2 + \rho v^2\right)m_R^{-1}\,dA}{\displaystyle \int_{\Omega} |\nabla v|^2\,dA},
$$
where $\mathcal{L}$ varies over $k$-dimensional subspaces of $H_0^2(\Omega)$ and all terms are nonnegative since $\tau, \rho \geq 0$. Clearly, $\Gamma_k(w_R,\tau) \geq \Gamma_k(m_R,\tau)$ and $\Lambda_k(w_R,\rho) \geq \Lambda_k(m_R,\rho)$.  Inspection of the variational characterizations also shows that
\begin{equation}\label{lemmaweak1num}
\delta^{-1}\Gamma_k(1, \tau) \geq \Gamma_k(m_R, \tau) \geq \Gamma_k(1, \tau),
\end{equation}
\begin{equation}\label{lemmaweak2num}
\delta^{-1}\Lambda_k(1, \rho) \geq \Lambda_k(m_R, \rho) \geq \Lambda_k(1, \rho).
\end{equation}
Our goal is to show
\begin{equation}\label{eq:weakgoalexpl1}
\liminf_{R \to \infty} \Gamma_k(m_R,\tau)\geq \delta^{-1}\Gamma_1(1,\tau),
\end{equation}
\begin{equation}\label{eq:weakgoalexpl2}
\liminf_{R \to \infty} \Lambda_k(m_R,\rho)\geq \delta^{-1}\Lambda_1(1,\rho),
\end{equation}
after which the lemma follows by letting $\delta \to 0$.

Following along with the proof of Lemma \ref{lem:ptwisevanfourthorder}, we generate weak eigenfunctions $f_R$ for $\Gamma_k(m_R, \tau)$ and $g_R$ for $\Lambda_k(m_R,\rho)$ that satisfy
$$
\int_\Omega \frac{\lapl \varphi \lapl f_R + \tau \grad \varphi \cdot \grad f_R}{m_R}\, dA
= \Gamma_k(m_R, \tau) \int_\Omega \varphi f_R \, dA,
$$
$$
\int_\Omega \frac{\lapl \varphi \lapl g_R + \rho \varphi g_R}{m_R}\, dA
= \Lambda_k(m_R, \rho) \int_\Omega \grad \varphi \cdot \grad g_R \, dA.
$$
We normalize by $\int_\Omega f_R^2 = 1$ and $\int_\Omega |\grad g_R|^2 = 1$ for each $R$, and in both cases, we generate families of functions uniformly bounded in $H_0^2(\Omega)$ by choosing $\varphi = f_R$ and $\varphi = g_R$ and using \eqref{lemmaweak1num} and \eqref{lemmaweak2num}. The Rellich-Kondrachov theorem yields two functions $f,g \in H_0^2(\Omega)$ so that $f_R \to f$ weakly in $H^2$ and strongly in $L^2$, and $g_R \to g$ weakly in $H^2$ and strongly in $H^1$.

The rest of the proof is identical to that of Lemma \ref{lem:ptwisevanfourthorder}, with the single modification that we use $m_R^{-1}\to \delta^{-1}$ a.e. in place of $m_R \to \delta$ a.e., as $R \to \infty$, hence arriving at \eqref{eq:weakgoalexpl1} and \eqref{eq:weakgoalexpl2}. Letting $\delta \to 0$, we conclude that the eigenvalues of the weighted operators tend to $\infty$ as $R \to \infty$.
\end{proof}

\subsection*{Vibrating limits (Theorem \ref{th:vibratingmono} and Table \ref{tab:vibrating-limits-conj} for $\tau \geq 0$)}

\subsubsection*{Hyperbolic $(S,S)$: $\Theta \to -\infty$}

First consider the vibrating problem with scaling and normalizing factor $S(\Theta)$. The variational characterization \eqref{eq:rq_square} in this case reads 

\[
\Gamma_j(\Theta, \tau/\sinh^2(\Theta))\sinh^4(\Theta) =\min_{\mathcal{L}}\max_{0 \neq v \in \mathcal{L}}\dfrac{\displaystyle \int_{\D}\frac{(1-R^2t^2)^2}{(1-R^2)^2}(\lapl v)^2 \,  dA +  \tau\int_{\D}|\grad v|^2 \, dA}{\displaystyle \int_{\D} \frac{(1-R^2)^2}{(1-R^2t^2)^2}v^2 \, dA}
\]
where $\mathcal{L}$ varies over $j$-dimensional subspaces of $H_0^2(\D)$ and $R = \tanh(|\Theta|/2).$ (For brevity, we will from now on only display the Rayleigh quotient for our limits with the understanding that the $j$-th scaled/normalized eigenvalue arises from the min-max characterization of said Rayleigh quotient.) Our goal is to show this quantity goes to $\infty$ as $R \to 1$.

We may bound the first integral in the numerator below by $\int_\D (\lapl v)^2\, dA$. Then, we see the weight $(1-R^2)^2/(1-R^2t^2)^2$ satisfies the hypotheses of Lemma \ref{lem:ptwisevanfourthorder} (just with $R \to 1$ instead of $R \to \infty$). Applying the lemma, we conclude that all eigenvalues tend to $\infty$.

\subsubsection*{Spherical $(S,S)$: $\Theta\to\pi$}

Write
\[
\mathcal{Q}_R(v):=
\frac{\displaystyle \int_{\D}\frac{(1+R^2t^2)^2}{(1+R^2)^2}(\Delta v)^2\,dA
+\tau\int_{\D}|\nabla v|^2\,dA}
{\displaystyle \int_{\D}\frac{(1+R^2)^2}{(1+R^2t^2)^2}v^2\,dA}
\]
with \(\tau \geq 0\) and \(0 \neq v \in H_0^2(\D)\).

Fix \(k\) and a nonzero bump function \(\eta\in C_c^\infty(B(0,1))\).
Observe for every \(R>0\) that the ball
$\D(R^{-1})=\{t \leq R^{-1}\}$ contains \(k\) pairwise disjoint balls of radius
\(1/kR\). Let \(x_1,\dots,x_k\) be their centers, and define
\[
\eta_i(x):=\eta\!\left(kR(x-x_i)\right), \qquad i=1,\dots,k.
\]
Then \(\operatorname{supp}\eta_i\subset B(x_i,1/kR)\), so the supports are
pairwise disjoint and contained in \(\{t\le R^{-1}\}\subset\D\).
Set \(V_k:=\mathrm{span}\{\eta_1,\dots,\eta_k\}\subset H_0^2(\D)\).

The $\eta_i$ are $H^2$-orthogonal and so by standard scaling laws,
\[
\int_\D \eta_i^2\,dA \sim R^{-2},\qquad
\int_\D |\nabla \eta_i|^2\,dA \sim 1,\qquad
\int_\D (\Delta \eta_i)^2\,dA \sim R^{2},
\]
where the implicit constants depend only on \(\eta\) and \(k\).
For \(t \le R^{-1}\) we have
\[
\frac{(1+R^2t^2)^2}{(1+R^2)^2}\le \frac{4}{(1+R^2)^2}\lesssim R^{-4},
\qquad
\frac{(1+R^2)^2}{(1+R^2t^2)^2}\ge \frac{(1+R^2)^2}{4}\gtrsim R^{4}.
\]
Therefore, for \(v=\sum c_i\eta_i\),
\[
\int_{\D}\frac{(1+R^2)^2}{(1+R^2t^2)^2}v^2\,dA
\;\gtrsim\;
R^4\int_\D v^2\,dA
\;\sim\;
R^2\sum_{i=1}^k|c_i|^2,
\]
\[
\int_{\D}\frac{(1+R^2t^2)^2}{(1+R^2)^2}(\Delta v)^2\,dA
\;\lesssim\;
R^{-4}\int_\D (\Delta v)^2\,dA
\;\sim\;
R^{-2}\sum_{i=1}^k|c_i|^2,
\]
and
\[
\int_\D |\nabla v|^2\,dA \sim \sum_{i=1}^k|c_i|^2.
\]
Hence, as $R \to \infty$,
\[
0 \leq \max_{0\neq v\in V_{k}} \mathcal{Q}_R(v)
\;\le\;
\frac{C_1R^{-2}+C_2\tau}{C_3R^2}
\to 0,
\]
where \(C_1,C_2,C_3\) are independent of \(R\).
By the min--max characterization, the $k$-th eigenvalue must tend to \(0\) as \(\Theta\to\pi\).
\subsubsection*{Hyperbolic $(T,T)$: $\Theta \to -\infty$}

Now we look at scaling and normalizing factor $T(\Theta)$.

Before applying transformations, the scaled and normalized eigenvalue has characterization
\begin{equation}\label{eq:vibratingmanifoldrq}
\Gamma_j(\Theta, \tau/T(\Theta)^2)T(\Theta)^4 = \min_{\mathcal{L}}\max_{0 \neq u \in \mathcal{L}}\dfrac{T(\Theta)^4\displaystyle \int_{C(\Theta)} (\lapl u)^2 \, dV +  T(\Theta)^2 \tau\int_{C(\Theta)} |\grad u|^2 \, dV} {\displaystyle \int_{C(\Theta)}  u^2 \, dV},
\end{equation}
where $T(\Theta) = 2\tanh(|\Theta|/2)$ and $\mathcal{L}$ ranges over $j$-dimensional subspaces of $H_0^2(C(\Theta))$.
Sending $\Theta \to -\infty$ shows that this expression formally appears to approach a biharmonic operator on hyperbolic space with a tensile parameter. Since $T(\Theta)\to 2$, the limiting value is affected by this constant factor. Our claim is that each of these scaled eigenvalues tends towards $1 + \tau$.
We first show this value is a lower bound for the limit, and then we show it is an upper bound.

First, observe that a function $u \in H_0^2(C(\Theta))$ can be extended to all of hyperbolic space $\H$ by defining it as $0$ outside $C(\Theta)$. Call such an extension $\tilde{u}$. By the calculations in \cite[p.\,1]{M70}, we see
\begin{equation}\label{eq:hypspeclowbound--grad}
\frac{1}{4}\int_{C(\Theta)} u^2 \, dV=\frac{1}{4}\int_\H \tilde{u}^2 \, dV \leq \int_\H |\grad \tilde{u}|^2 \, dV = \int_{C(\Theta)} |\grad u|^2 \, dV.
\end{equation}
Using Cauchy--Schwarz and integrating by parts, we calculate
\[
\left(\int_{C(\Theta)} |\grad u|^2 \, dV\right)^2= \left( \int_{C(\Theta)} u(-\lapl u) \,dV \right)^2 \leq \int_{C(\Theta)} (\lapl u)^2 \,dV \, \int_{C(\Theta)}u^2 \,dV.
\]
Applying Equation \eqref{eq:hypspeclowbound--grad} to the left-hand side and dividing by $\int_{C(\Theta)}u^2 \,dV$ yields the lower bound
\begin{equation}\label{eq:hypspeclowbound--lapl}
\frac{1}{16}\int_{C(\Theta)} u^2 \, dV\leq \int_{C(\Theta)} (\lapl u)^2 \, dV.
\end{equation}
Thus, the Rayleigh quotient in \eqref{eq:vibratingmanifoldrq} has the universal lower bound
\[
\frac{1}{16}T(\Theta)^4 + \frac{\tau}{4}T(\Theta)^2.
\]
Letting $\Theta \to -\infty$, the universal lower bound converges to $1+ \tau$ and does not depend on the variational subspace chosen. We conclude for every $j \geq 1$ that
\[
\liminf_{\Theta \to -\infty} \Gamma_j(\Theta, \tau/T(\Theta)^2)T(\Theta)^4 \geq 1+ \tau.
\]

Now, we show the upper bound. Consider the Laplace-Beltrami operator $-\lapl_\H$ on all of hyperbolic space. Let $P_{-1/2}$ represent the Legendre function of degree $-1/2$. Recall our coordinate system $z \simeq (\theta,\xi)$ where $\theta$ is geodesic radius. We consider the radial function $\psi(\theta) = P_{-1/2}(\cosh \theta)$ which satisfies the pointwise eigenfunction equation
\[
-\lapl_\H \psi = \frac{1}{4}\psi.
\]
Note that $\psi(\theta) \sim \theta e^{-\theta/2}$ for large $\theta$ and is positive everywhere \cite[Section 14.8, Eq 14.8.14]{DLMF}. Further, using \cite[Section 14.10]{DLMF} and the asymptotic $P_{1/2}(x) = O(\sqrt x)$, it is not too hard to derive that 

\begin{equation}\label{eq:legendredevbound}
|\grad_\H \psi(\theta)| = |\partial_\theta (P_{-1/2}(\cosh \theta))| \lesssim \theta e^{-\theta/2} \sim \psi(\theta). 
\end{equation}

We will use the form of the Rayleigh quotient in \eqref{eq:vibratingmanifoldrq} for the proof. Let $C(\Theta)$ be the geodesic disk and set $\Theta'=\Theta+1$ (recall that $\Theta<0$ in the hyperbolic case), so that $C(\Theta')$ is the concentric inner disk of radius $|\Theta|-1$. Fix the transition annulus of width 1,
$A(\Theta) := C(\Theta)\setminus C(\Theta')$. Let $\eta$ be a smooth, compactly supported radial cutoff function on $C(\Theta)$ that equals $1$ on $C(\Theta')$ and decays to 0 across $A(\Theta)$.  We supply $f = \eta \, \psi$ as a trial function into the variational characterization for the first eigenvalue. Since $f$ agrees with the pointwise eigenfunction $\psi$ on $C(\Theta+1)$, the only error is generated on the transition annulus, and we will show the error is well-controlled despite the exponential volume growth.

We start by controlling the gradient term in the Rayleigh quotient. First observe that by Green's identity applied to $\eta^2\psi$ and $\psi$, we obtain
\begin{align*}
-\int_{C(\Theta)} \eta^2\psi\,\Delta\psi\,dV
&= \int_{C(\Theta)} \langle \grad(\eta^2\psi),\grad\psi\rangle_g\,dV \\
&= \int_{C(\Theta)} \eta^2|\grad\psi|^2\,dV
  + \int_{C(\Theta)} \psi\langle \grad(\eta^2),\grad\psi\rangle_g\,dV \\
&= \int_{C(\Theta)} \eta^2|\grad\psi|^2\,dV
  + 2\int_{C(\Theta)} \eta\psi\langle \grad\eta,\grad\psi\rangle_g\,dV.
\end{align*}
Then, applying the displayed equation and expanding $|\grad f|^2$ gives
\begin{align*}
\int_{C(\Theta)} |\grad f|^2\,dV
&= \int_{C(\Theta)} \eta^2|\grad\psi|^2\,dV + 2\int_{C(\Theta)} \eta\psi\langle \grad\eta,\grad\psi\rangle_g\,dV
 + \int_{C(\Theta)} \psi^2|\grad\eta|^2\,dV
  \notag\\
&=
-\int_{C(\Theta)} \eta^2\psi\,\Delta\psi\,dV
   + \int_{C(\Theta)} \psi^2|\grad\eta|^2\,dV \notag\\
&= \frac14\int_{C(\Theta)} f^2\,dV
   + \int_{C(\Theta)} \psi^2|\grad\eta|^2\,dV
\end{align*}
where we have used the fact that $-\lapl \psi = \frac{1}{4}\psi$.  As $\grad \eta$ is only supported on the transition annulus, the gradient term has the bound
\[
\int_{C(\Theta)} \psi^2|\grad \eta|^2 \,dV \lesssim \int_{A(\Theta)} \psi^2 \, dV
\]
and we deduce that
\begin{equation}\label{eq:vibTTgrad-id}
0 \leq \int_{C(\Theta)} |\grad f|^2 \, dV - \frac{1}{4} \int_{C(\Theta)} f^2 \, dV \lesssim \int_{A(\Theta)} \psi^2 \,dV.
\end{equation}
For the Laplacian term, observe that
\[
-\lapl f-\frac14 f
= -\lapl(\eta\psi)-\frac14\eta\psi
= -\psi\,\lapl\eta-2\langle \grad\eta,\grad\psi\rangle_g,
\]
since $-\lapl\psi=\frac14\psi$. Using that $-\lapl f-\frac14 f$ is only supported on
$A(\Theta)$ and $\grad\eta$ and $\lapl\eta$ are uniformly bounded there, we find
\begin{equation}\label{eq:vibTTlapl-error}
\int_{C(\Theta)}\left(-\lapl f-\frac14 f\right)^2\,dV
\lesssim
\int_{A(\Theta)} \bigl(|\grad\psi|^2+\psi^2\bigr)\,dV \lesssim \int_{A(\Theta)} \psi^2 \,dV
\end{equation}
after applying Equation \eqref{eq:legendredevbound}.
Now, by Green's identity,
\begin{align*}
\int_{C(\Theta)}\left(-\lapl f-\frac14 f\right)^2\,dV
&=
\int_{C(\Theta)} (\lapl f)^2\,dV
+\frac12\int_{C(\Theta)} f\,\lapl f\,dV
+\frac1{16}\int_{C(\Theta)} f^2\,dV \\
&=
\int_{C(\Theta)} (\lapl f)^2\,dV
-\frac12\int_{C(\Theta)} |\grad f|^2\,dV
+\frac1{16}\int_{C(\Theta)} f^2\,dV.
\end{align*}
Rearranging, we obtain
\[
\int_{C(\Theta)} (\lapl f)^2\,dV-\frac1{16}\int_{C(\Theta)} f^2\,dV
=
\int_{C(\Theta)}\left(-\lapl f-\frac14 f\right)^2\,dV
+\frac12\left(
\int_{C(\Theta)} |\grad f|^2\,dV-\frac14\int_{C(\Theta)} f^2\,dV
\right).
\]
Using \eqref{eq:vibTTgrad-id} and \eqref{eq:vibTTlapl-error}, we deduce
\[
0\le
\int_{C(\Theta)} (\lapl f)^2\,dV-\frac1{16}\int_{C(\Theta)} f^2\,dV
\lesssim
\int_{A(\Theta)} \psi^2\,dV.
\]
The asymptotics imply $\psi(\theta)^2\sinh\theta \asymp \theta^2$. (Here, $\asymp$ means $\lesssim$ and $\gtrsim$.) Therefore,
\[
\int_{A(\Theta)}\psi^2\,dV
=2\pi\int_{|\Theta|-1}^{|\Theta|}\psi(\theta)^2\sinh\theta\,d\theta
\lesssim \int_{|\Theta|-1}^{|\Theta|}\theta^2\,d\theta
\lesssim |\Theta|^2,
\]
\[
\int_{C(\Theta)}f^2\,dV
\ge \int_{C(\Theta+1)}\psi^2\,dV
=2\pi\int_{0}^{|\Theta|-1}\psi(\theta)^2\sinh\theta\,d\theta
\gtrsim \int_{1}^{|\Theta|-1}\theta^2\,d\theta
\gtrsim |\Theta|^3
\]
using that $f=\psi$ on $C(\Theta+1)$.

As a result,
\[
\frac{\int_{A(\Theta)} \psi^2\,dV}{\int_{C(\Theta)} f^2\,dV}\to 0
\qquad\text{as }\Theta\to-\infty.
\]
Hence, dividing \eqref{eq:vibTTgrad-id} by $\int_{C(\Theta)}f^2\,dV$ yields
\begin{equation}\label{eq:vibTTgradratio}
\limsup_{\Theta \to -\infty}\frac{\int_{C(\Theta)} |\grad f|^2 \, dV}{\int_{C(\Theta)} f^2 \,dV} \leq \frac14.
\end{equation}
Likewise,
\begin{equation}\label{eq:vibTTlaplratio}
\limsup_{\Theta \to -\infty}\frac{\int_{C(\Theta)} (\lapl f)^2 \, dV}{\int_{C(\Theta)} f^2 \,dV} \leq \frac{1}{16}.
\end{equation}
Moreover, by \eqref{eq:hypspeclowbound--grad} and \eqref{eq:hypspeclowbound--lapl}, the corresponding lower bounds match these upper bounds, so the $\limsup$ in \eqref{eq:vibTTgradratio} and \eqref{eq:vibTTlaplratio} are in fact limits.

Using \eqref{eq:vibTTgradratio} and \eqref{eq:vibTTlaplratio} in the Rayleigh quotient in \eqref{eq:vibratingmanifoldrq} yields
\[
\limsup_{\Theta \to -\infty} \frac{T(\Theta)^4\int_{C(\Theta)} (\lapl f)^2 \, dV + T(\Theta)^2\tau\int_{C(\Theta)} |\grad f|^2 \, dV}{\int_{C(\Theta)} f^2 \,dV} \leq 16\cdot\frac{1}{16}+4\tau\cdot\frac14=1+\tau.
\]
It follows that for this specifically chosen trial function $f$, the Rayleigh quotient tends to $1+\tau$ as $\Theta \to -\infty$. Thus, for the lowest eigenvalue, we have that
\[
\limsup_{\Theta \to -\infty} \Gamma_1(\Theta, \tau/T(\Theta)^2)T(\Theta)^4 \leq 1+\tau
\]
and hence its limit is $1 + \tau$, since we already showed $1 + \tau$ is a lower bound.

To conclude the above upper bound for the $j$-th eigenvalue, we can enlarge the geodesic disk enough to contain $j$ copies of the trial function $f$ with disjoint supports. As hyperbolic space has infinite volume, we can always accomplish such an enlargement by taking $\Theta$ sufficiently negative. These disjoint copies will thus be $H^2$-orthogonal and so the variational characterization implies that the $j$-th eigenvalue has upper bound of $1+\tau$ in the limit as $\Theta \to -\infty$. As showed previously, this value is a lower bound and the claim follows.

\subsubsection*{Spherical $(T,T)$: $\Theta \to \pi$}

The Rayleigh quotient here reads
\[
\dfrac{\displaystyle \int_{\D}(1 + R^2t^2)^2(\lapl v)^2 \,  dA +  \tau\int_{\D}|\grad v|^2 \, dA}{\displaystyle \int_{\D}\frac{1}{(1 + R^2t^2)^2}v^2 \, dA}
\]
The Laplacian coefficient is bounded below by $1$ and the $v^2$ coefficient satisfies the hypotheses of Lemma \ref{lem:ptwisevanfourthorder}. By the lemma, all eigenvalues tend to $\infty$ as $R \to \infty$.

\subsection*{Buckling limits (Theorem \ref{th:bucklingneg} and Table \ref{tab:buckling-neg-limits-conj} for $\rho \leq 0$)}

\subsubsection*{Hyperbolic $(S,S)$: $\Theta \to -\infty$}

We consider the buckling problem with scaling and normalizing factor $S(\Theta)$ and $\rho \leq 0$. The first part of Theorem \ref{th:bucklingneg} asserts that this eigenvalue is decreasing.

The Rayleigh quotient reads
\[
\dfrac{\displaystyle \int_{\D}\frac{(1- R^2t^2)^2}{(1 -  R^2)^2}(\lapl v)^2 \,  dA +\rho\int_{\D}\frac{(1 -  R^2)^2}{(1 -  R^2t^2)^2}v^2 \, dA}{\displaystyle \int_{\D} |\grad v|^2 dA}.
\]
Write $\varepsilon = 1-R^2$ and $y = 1-t^2$. Note that $y \sim \text{dist}(\partial \D, z)$ with $z = te^{i\theta}$. Then we have the bound
\[
\frac{(1-R^2t^2)^2}{(1-R^2)^2} \geq y^2/\varepsilon^2. 
\]

Recall the Hardy-type bound $\int |\grad v|^2 \geq C_1 \int v^2/y^2$ for some constant $C_1 > 0$. Since $v$ is clamped, we can integrate by parts to see
\[
\int |\grad v|^2 \leq \int |v||\lapl v| \leq \left (\int (v^2/y^2)\int y^2(\lapl v)^2 \right)^{1/2} \leq \left (C_1^{-1}\int |\grad v|^2\int y^2(\lapl v)^2 \right)^{1/2}.
\]
Then, we get a lower bound on the first term of the numerator of
\[
\geq C_1 \varepsilon^{-2} \int |\grad v|^2.
\]

For the second term, observe that $1-R^2 \leq 1-R^2t^2$. Using that $\rho \leq 0$, we derive a lower bound 
\[
\rho\int_\D \frac{(1-R^2)^2}{(1-R^2t^2)^2}v^2 \, dA \geq \rho\int_\D v^2 \, dA \geq C_2 \,\rho \int |\grad v|^2 \, dA
\]
for some $C_2 > 0$ by applying the Poincaré inequality.

Therefore, the Rayleigh quotient is bounded below by
\[
C_1 \varepsilon^{-2} + C_2\rho
\]
for universal positive constants $C_1, C_2$. Sending $R \to 1$ ($\varepsilon \to 0$) shows the minimum of the Rayleigh quotient tends to $\infty$. Thus the first eigenvalue tends to $\infty$ by the variational characterization, and hence so do all higher eigenvalues and the claim follows. 

\subsubsection*{Spherical $(S,S)$: $\Theta \to \pi$}

The Rayleigh quotient here is
\[
\mathcal Q_R(v):=
\dfrac{\displaystyle \int_{\D}\frac{(1+ R^2t^2)^2}{(1+ R^2)^2}(\lapl v)^2 \,  dA +\rho\int_{\D}\frac{(1+  R^2)^2}{(1+  R^2t^2)^2}v^2 \, dA}{\displaystyle \int_{\D} |\grad v|^2 \, dA}.
\]

Fix $k\geq1$ and use the same $k$-dimensional trial space as in the spherical $(S,S)$ vibrating limit. Thus
\[
V_k=\operatorname{span}\{\eta_1,\dots,\eta_k\}\subset H_0^2(\D),
\]
where the $\eta_i$ have pairwise disjoint supports contained in $\{t\leq R^{-1}\}$. For $v=\sum_{i=1}^k c_i\eta_i$, the scaling estimates from that proof give
\[
\int_\D v^2\,dA\asymp R^{-2}\sum_{i=1}^k|c_i|^2,\qquad
\int_\D|\grad v|^2\,dA\asymp\sum_{i=1}^k|c_i|^2,
\qquad
\int_\D(\lapl v)^2\,dA\asymp R^2\sum_{i=1}^k|c_i|^2.
\]
On these supports,
\[
\frac{(1+R^2t^2)^2}{(1+R^2)^2}\lesssim R^{-4},
\qquad
\frac{(1+R^2)^2}{(1+R^2t^2)^2}\gtrsim R^4.
\]
If $\rho<0$, multiplication of the second inequality by $\rho$ reverses its direction, and hence
\[
\max_{0\neq v\in V_k}\mathcal Q_R(v)
\leq C_1R^{-2}-C_2|\rho|R^2\longrightarrow-\infty.
\]
The min--max characterization therefore gives
\[
\Lambda_k(\Theta,\rho/S(\Theta)^4)S(\Theta)^2\longrightarrow-\infty
\qquad\text{as }\Theta\to\pi.
\]
When $\rho=0$, the quotient is nonnegative, while the same trial space gives
\[
0\leq \Lambda_k(\Theta,0)S(\Theta)^2
\leq \max_{0\neq v\in V_k}\mathcal Q_R(v)
\lesssim R^{-2}\longrightarrow0.
\]

\begin{remark}[Exterior interpretation]\label{rem:exterior-buckling}
Formally letting $R\to\infty$ in the preceding quotient and applying the inversion $z\mapsto1/z$ produces the exterior buckling quotient
\[
\dfrac{\displaystyle \int_{\R^2\setminus\D}(\lapl v)^2\,dA
+\rho\int_{\R^2\setminus\D}v^2\,dA}
{\displaystyle \int_{\R^2\setminus\D}|\grad v|^2\,dA}.
\]
The quotient above represents the buckling analogue of the exterior Robin limit in \cite{H25}, but with a different outcome. When $\rho<0$, the exterior quotient is unbounded below and does not yield finite discrete negative eigenvalues. As a curiosity (or perhaps it is related), this limiting case is the only one in the paper in which every fixed-index eigenvalue tends to $-\infty$. Related exterior problems are studied in \cite{AF04,AF05,MM21,MM23}.
\end{remark}

\subsubsection*{Hyperbolic $(T,T)$: $\Theta \to -\infty$} 

Now we consider scaling and normalizing factor $T(\Theta)$, which is increasing by Theorem \ref{th:bucklingneg}. The proof here is similar to the one for the hyperbolic $(T,T)$ vibrating limit. We claim for each fixed $j\ge 1$ that
\[
\lim_{\Theta\to-\infty}\Lambda_j(\Theta,\rho/T(\Theta)^4)\,T(\Theta)^2=1+\rho.
\]

Before applying transformations, the scaled and normalized eigenvalue has characterization
\begin{equation*}
\Lambda_j(\Theta, \rho/T(\Theta)^4)T(\Theta)^2 = \min_{\mathcal{L}}\max_{0 \neq u \in \mathcal{L}}\dfrac{T(\Theta)^2\displaystyle \int_{C(\Theta)} (\lapl u)^2 \, dV +  T(\Theta)^{-2} \rho\int_{C(\Theta)} | u|^2 \, dV} {\displaystyle \int_{C(\Theta)}  |\grad u|^2 \, dV}.
\end{equation*}

\noindent Fix $u\in H_0^2(C(\Theta))$. Using Equations \eqref{eq:hypspeclowbound--grad} and \eqref{eq:hypspeclowbound--lapl} and the fact that $\rho \leq 0$, we deduce
\[
\dfrac{T(\Theta)^2\displaystyle \int_{C(\Theta)} (\lapl u)^2 \, dV +  T(\Theta)^{-2} \rho\int_{C(\Theta)} u^2 \, dV} {\displaystyle \int_{C(\Theta)}  |\grad u|^2 \, dV}
\ge \frac{T(\Theta)^2}{4}+\frac{4\rho}{T(\Theta)^2}.
\]
Hence $\Lambda_j(\Theta,\rho/T(\Theta)^4)\,T(\Theta)^2 \ge \frac{T(\Theta)^2}{4}+\frac{4\rho}{T(\Theta)^2}$ and letting $\Theta\to-\infty$ yields the universal lower bound of $1 + \rho$ for the Rayleigh quotient. Thus for every $j \geq 1$,
\[
\liminf _{\Theta\to-\infty}\Lambda_j(\Theta,\rho/T(\Theta)^4)\,T(\Theta)^2 \geq 1 + \rho
\]

For a corresponding upper bound we reuse the trial function $\psi(\theta)=P_{-1/2}(\cosh\theta)$ which satisfies $-\lapl_{\H^2}\psi=\frac14\psi$. Let $\eta$ be the smooth cutoff supported on the annulus $A(\Theta)=C(\Theta)\setminus C(\Theta+1)$, with $\eta\equiv 1$ on $C(\Theta+1)$, constructed above (recall that $\Theta<0$). Define $f=\eta\psi\in H_0^2(C(\Theta))$.
Using \eqref{eq:vibTTgradratio} and \eqref{eq:vibTTlaplratio} (applied to this same cutoff $f$) we have
\[
\lim_{\Theta \to -\infty} \frac{\int_{C(\Theta)}(\lapl f)^2\,dV}{\int_{C(\Theta)}|\grad f|^2\,dV} = \frac14,
\qquad
\lim_{\Theta \to -\infty}\frac{\int_{C(\Theta)}f^2\,dV}{\int_{C(\Theta)}|\grad f|^2\,dV}=4,
\]
as $\Theta\to-\infty$. Therefore,
\[
\limsup_{\Theta\to-\infty}\Lambda_1(\Theta,\rho/T(\Theta)^4)\,T(\Theta)^2
\le \lim_{\Theta\to-\infty}\left(\frac{T(\Theta)^2}{4}+\frac{4\rho}{T(\Theta)^2}\right)=1+\rho.
\]
As in the vibrating case, to extend this estimate to higher eigenvalues we enlarge the disk to contain $j$ disjointly supported translates of $f$, and use their span as a trial subspace. Hence $\limsup\limits_{\Theta \to -\infty} \le 1+\rho$ for each fixed $j$. Combining the lower and upper bounds proves the claim.

\subsubsection*{Spherical $(T,T)$: $\Theta \to \pi$}

After transforming to the unit disk, the Rayleigh quotient becomes
\[
Q_R(v)=
\frac{\displaystyle
\int_{\D}(1+R^2t^2)^2(\lapl v)^2\,dA
+\rho\int_{\D}(1+R^2t^2)^{-2}v^2\,dA}
{\displaystyle\int_{\D}|\grad v|^2\,dA}.
\]
Let $\lambda_1^D(\D)>0$ denote the first Dirichlet Laplacian eigenvalue on $\D$. Since $\rho\leq0$ and $(1+R^2t^2)^{-2}\leq1$, the Poincar\'e inequality gives
\[
\rho\int_{\D}(1+R^2t^2)^{-2}v^2\,dA
\geq \rho\int_{\D}v^2\,dA
\geq \frac{\rho}{\lambda_1^D(\D)}
\int_{\D}|\grad v|^2\,dA.
\]
Consequently,
\[
Q_R(v)\geq
\frac{\displaystyle\int_{\D}(1+R^2t^2)^2(\lapl v)^2\,dA}
{\displaystyle\int_{\D}|\grad v|^2\,dA}
+\frac{\rho}{\lambda_1^D(\D)}.
\]
The first term has eigenvalues tending to $\infty$ by Lemma \ref{lem:ptwisevanfourthorder--numer}, applied with $w_R=(1+R^2t^2)^{-2}$ and $\rho=0$. The second term is a fixed constant, so all eigenvalues tend to $\infty$ as $R\to\infty$.

\subsection*{Buckling limits (Theorem \ref{th:bucklingpos} and Table \ref{tab:buckling-pos-mixed-limits-conj} for $\rho \geq 0$)}

\subsubsection*{Hyperbolic $(S, T)$: $\Theta \to -\infty$}

Now we consider mixed factors and $\rho \geq 0$. We take scaling factor $S(\Theta)$ and normalizing factor $T(\Theta)$ first.

The Rayleigh quotient reads
\begin{equation}\label{eq:idk_theres_a_lot}
\dfrac{\displaystyle \int_{\D}\frac{(1-R^2t^2)^2}{(1-R^2)^2}(\lapl v)^2 \,  dA +  \rho\int_{\D} (1-R^2)^{-2}(1-R^2t^2)^{-2}v^2 \, dA}{\displaystyle \int_{\D} |\grad v|^2 \, dA}.
\end{equation}
We bound the $L^2$ integral below by 0 since $\rho$ is non-negative. Then, the weight $\frac{(1-R^2t^2)^2}{(1-R^2)^2} := w_R^{-1}$ satisfies the hypotheses of Lemma \ref{lem:ptwisevanfourthorder--numer} with $\rho = 0$. Since the weighted eigenvalues in Lemma \ref{lem:ptwisevanfourthorder--numer} are a lower bound, we conclude the mixed-factor eigenvalues in \eqref{eq:idk_theres_a_lot} tend to $\infty$.

\subsubsection*{Spherical $(S,T)$: $\Theta\to\pi$}

The Rayleigh quotient here is
\[
\mathcal Q_R(v):=
\frac{\displaystyle \int_{\D}\frac{(1+R^2t^2)^2}{(1+R^2)^2}(\lapl v)^2\,dA
+\rho\int_{\D}\frac{1}{(1+R^2)^2(1+R^2t^2)^2}v^2\,dA}
{\displaystyle \int_{\D}|\grad v|^2\,dA}.
\]
The proof is similar to the spherical $(S,S)$ case for Theorem \ref{th:vibratingmono}, using the same \(k\)-dimensional trial space
\[
V_k=\mathrm{span}\{\eta_1,\dots,\eta_k\}\subset H_0^2(\D),
\]
with pairwise disjoint supports contained in \(\{t\le R^{-1}\}\). By the scaling estimates from that previous proof, for
\(v=\sum_{i=1}^k c_i\eta_i\),
\[
\int_\D v^2\,dA \sim R^{-2}\sum_{i=1}^k |c_i|^2,\qquad
\int_\D |\grad v|^2\,dA \sim \sum_{i=1}^k |c_i|^2,\qquad
\int_\D (\lapl v)^2\,dA \sim R^{2}\sum_{i=1}^k |c_i|^2.
\]
Moreover, on \(\{t\le R^{-1}\}\),
\[
\frac{(1+R^2t^2)^2}{(1+R^2)^2}\lesssim R^{-4},
\qquad
\frac{1}{(1+R^2)^2(1+R^2t^2)^2}\lesssim R^{-4}.
\]

Therefore
\[
0\le \max_{0\neq v\in V_k}\mathcal Q_R(v)\lesssim R^{-2}+\rho R^{-6}\to 0
\qquad\text{as }R\to\infty
\]
using that $\rho \geq 0$. By the min--max principle, the \(k\)th eigenvalue tends to \(0\) as \(\Theta\to\pi\). Since \(k\) was arbitrary, every eigenvalue tends to \(0\).

\subsubsection*{Hyperbolic $(T,S)$: $\Theta \to -\infty$} 

The variational characterization prior to transformations is
\[
\Lambda_j(\Theta, \rho/S(\Theta)^4)T(\Theta)^2 = \min_{\mathcal{L}}\max_{0 \neq u \in \mathcal{L}}\dfrac{T(\Theta)^2\displaystyle \int_{C(\Theta)} (\lapl u)^2 \, dV +  \frac{T(\Theta)^{2}}{S(\Theta)^4} \rho\int_{C(\Theta)} | u|^2 \, dV} {\displaystyle \int_{C(\Theta)}  |\grad u|^2 \, dV}.
\]
We claim that for each fixed $j\ge 1$,
\[
\lim_{\Theta\to-\infty}\Lambda_j(\Theta,\rho/S(\Theta)^4)\,T(\Theta)^2=1.
\]
The proof here is similar to the hyperbolic $(T,T)$ vibrating and buckling limits. We use Equations \eqref{eq:hypspeclowbound--grad} and \eqref{eq:hypspeclowbound--lapl} and the fact that $\rho \geq 0$ to deduce
\[
\dfrac{T(\Theta)^2\displaystyle \int_{C(\Theta)} (\lapl u)^2 \, dV +  \frac{T(\Theta)^{2}}{S(\Theta)^4} \rho\int_{C(\Theta)} | u|^2 \, dV} {\displaystyle \int_{C(\Theta)}  |\grad u|^2 \, dV} \geq \dfrac{T(\Theta)^2\displaystyle \int_{C(\Theta)} (\lapl u)^2 \, dV} {\displaystyle \int_{C(\Theta)}  |\grad u|^2 \, dV} \geq \frac{T(\Theta)^2}{4}.
\]
Letting $\Theta \to -\infty$ shows $\liminf \geq 1$.

For the upper bound, we reuse the radial function $\psi$ in the hyperbolic $(T,T)$ vibrating and buckling limit proofs. Let $f=\eta\psi\in H_0^2(C(\Theta))$ be defined as it was in those arguments.
Using \eqref{eq:vibTTgradratio} and \eqref{eq:vibTTlaplratio} we have
\[
\frac{\int_{C(\Theta)}(\lapl f)^2\,dV}{\int_{C(\Theta)}|\grad f|^2\,dV}\to \frac14
\qquad\text{as }\Theta\to-\infty.
\]
Now observe that Equation \eqref{eq:hypspeclowbound--grad} implies
\[
0 \leq \dfrac{\frac{T(\Theta)^2}{S(\Theta)^4} \rho \int_{C(\Theta)} |f|^2 \, dV}{\int_{C(\Theta)}|\grad f|^2 \, dV } \leq  4\frac{T(\Theta)^2}{S(\Theta)^4}\rho \to 0.
\]
We conclude that the first eigenvalue tends to $1$. The argument used before of enlarging the disk to contain disjoint copies of $f$ still applies and we conclude the higher eigenvalues tend to $1$ as well.

\subsubsection*{Spherical $(T,S)$: $\Theta \to \pi$}

The Rayleigh quotient here reads

\[
\dfrac{\displaystyle \int_{\D}(1+R^2t^2)^2(\lapl v)^2 \,  dA +  \rho\int_{\D} \frac{(1+R^2)^4}{(1+R^2t^2)^2}v^2 \, dA}{\displaystyle \int_{\D} |\grad v|^2 \, dA}.
\]
As $\rho \geq 0$ and $(1+R^2t^2)^{-2}$ satisfies the hypotheses of Lemma \ref{lem:ptwisevanfourthorder--numer}, we conclude all eigenvalues tend to $\infty$ as $R \to \infty$.

\subsection{Euclidean limits}\label{sec:euc_limits_buck_vibr}

We look at one specific example, as the proof is identical in all cases. Consider the first eigenvalue in Theorem \ref{th:vibratingmono} with variational characterization
\[
\Gamma_j(\Theta, \tau/\si^2(\Theta))\si^4(\Theta) =\min_{\mathcal{L}}\max_{0 \neq v \in \mathcal{L}}\dfrac{\displaystyle \int_{\D}\frac{(1\pm R^2t^2)^2}{(1\pm R^2)^2}(\lapl v)^2 \,  dA +  \tau\int_{\D}|\grad v|^2 \, dA}{\displaystyle \int_{\D} \frac{(1\pm R^2)^2}{(1\pm R^2t^2)^2}v^2 \, dA}
\]
where we take the negative sign in the hyperbolic case and positive sign in the spherical case. It is not hard to see that as $R \to 0$ the coefficients converge to $1$, uniformly with respect to $t$. Interchanging limits with the min-max characterization due to uniform convergence shows that
\[
\Gamma_j(\Theta, \tau/\si^2(\Theta))\si^4(\Theta) \to \Gamma_j(\D, \tau) 
\]
 as $\Theta \to 0$, where $\Gamma_j(\D, \tau)$ is the $j$-th vibrating eigenvalue on the unit disk $\D$ with parameter $\tau$. It is also straightforward to show all the coefficients present in Table \ref{tab:coeff-monotonicity} converge uniformly to 1, and hence all scaled and normalized eigenvalues in Theorems \ref{th:vibratingmono}, \ref{th:bucklingneg}, and \ref{th:bucklingpos} converge to Euclidean eigenvalues on $\D$ as $\Theta \to 0$.

\section*{Funding Acknowledgement}

The author was supported in part by National Science Foundation Award No.~2246537 to the author's advisor, Richard Laugesen. This support is gratefully acknowledged.

\section*{Declaration of generative AI and AI-assisted technologies in the manuscript preparation process}

During the preparation of this manuscript, the author used OpenAI Prism and GPT-5.6 Sol for limited assistance with LaTeX formatting, article organization, language editing, and checking and revising proof details in the spherical buckling limits as $\Theta\to\pi$ for Theorem~\ref{th:bucklingneg}. The underlying research and mathematical results are the author's own. The author reviewed all suggested changes and takes full responsibility for the article.


\begin{thebibliography}{99}

\bibitem{AF04}
C. Amrouche and M. Fontes.
\emph{Biharmonic problem in exterior domains.}
C. R. Math. Acad. Sci. Paris \textbf{338} (2004), no. 2, 121--126.

\bibitem{AF05}
C. Amrouche and M. Fontes.
\emph{Biharmonic problem in exterior domains of ${\mathbb R}^n$: an approach with weighted Sobolev spaces.}
J. Math. Anal. Appl. \textbf{304} (2005), no. 2, 552--571.

\bibitem{C11}
L. M. Chasman.
\emph{An isoperimetric inequality for fundamental tones of free plates.}
Commun. Math. Phys. \textbf{303} (2011), 421--449.

\bibitem{CP22}
B. Colbois and L. Provenzano.
\emph{Neumann eigenvalues of the biharmonic operator on domains: geometric bounds and related results.}
J. Geom. Anal. \textbf{32} (2022), 218.

\bibitem{E10}
L. Evans.
Partial Differential Equations.
Second edition.
Grad. Stud. Math., \textbf{19}.
American Mathematical Society, Providence, RI, 2010.

\bibitem{H25}
S. Harman.
\emph{Scaling inequalities and limits for Robin and Dirichlet eigenvalues.}
J. Math. Anal. Appl. \textbf{544} (2025), no. 2, Paper No. 129082.

\bibitem{H96}
E. Hebey.
Sobolev Spaces on Riemannian Manifolds.
Lecture Notes in Math., \textbf{1635}.
Springer-Verlag, Berlin, 1996.

\bibitem{K93}
B. Kawohl, H.~A. Levine, and W. Velte.
\emph{Buckling eigenvalues for a clamped plate embedded in an elastic medium and related questions.}
SIAM J. Math. Anal. \textbf{24} (1993), no. 2, 327--340.

\bibitem{LL22}
J. J. Langford and R. S. Laugesen.
\emph{Scaling inequalities for spherical and hyperbolic eigenvalues.}
J. Spectr. Theory \textbf{13} (2023), no. 1, 263--296.

\bibitem{L12}
R. S. Laugesen.
Spectral Theory of Partial Differential Equations --- Lecture Notes.
\href{https://arxiv.org/abs/1203.2344}{arXiv:1203.2344} (2012).

\bibitem{M70}
H. P. McKean.
\emph{An upper bound to the spectrum of $\Delta$ on a manifold of negative curvature.}
J. Differential Geom. \textbf{4} (1970), no. 3, 359--366.
\href{https://doi.org/10.4310/jdg/1214429509}{doi:10.4310/jdg/1214429509}.

\bibitem{MM21}
G. Migliaccio and H. A. Matevossian.
\emph{Exterior biharmonic problem with the mixed Steklov and Steklov-type boundary conditions.}
Lobachevskii J. Math. \textbf{42} (2021), 1886--1899.

\bibitem{MM23}
G. Migliaccio and H. A. Matevossian.
\emph{Steklov-Farwig biharmonic problem in exterior domains.}
Lobachevskii J. Math. \textbf{44} (2023), no. 6, 2413--2428.

\bibitem{R77}
R. C. Reilly.
\emph{Applications of the Hessian operator in a Riemannian manifold.}
Indiana Univ. Math. J. \textbf{26} (1977), no. 3, 459--472.
\href{https://doi.org/10.1512/iumj.1977.26.26036}{doi:10.1512/iumj.1977.26.26036}.

\bibitem{DLMF}
NIST Digital Library of Mathematical Functions.
W. J. Olver, A. B. Olde Daalhuis, D. W. Lozier, B. I. Schneider, R. F. Boisvert, C. W. Clark, B. R. Miller, B. V. Saunders, H. S. Cohl, and M. A. McClain, eds.
\url{https://dlmf.nist.gov/}.

\end{thebibliography}
\end{document}